\newcommand{\be}{\begin{equation}}
\newcommand{\ee}{\end{equation}}
\newcommand{\beq}{\begin{eqnarray}}
\newcommand{\eeq}{\end{eqnarray}}
\newtheorem{thm}{Theorem}[section]
\newtheorem{lma}{Lemma}[section]
\newtheorem{prop}{Proposition}[section]
\newtheorem{cor}{Corollary}[section]
\theoremstyle{remark}
\newtheorem{rem}{Remark}[section]
\numberwithin{equation}{section}
\def\tr{\operatorname{tr}}
\def\dps{\displaystyle}
\def\be{\begin{equation}}
\def\ee{\end{equation}}
\def\bee{\begin{equation*}}
\def\eee{\end{equation*}}
\def\lf{\left}
\def\ri{\right}
\def\Ric{\text{\rm Ric}}
\def\Rm{\text{\rm Rm}}
\def\wt{\widetilde}
\def\la{\langle}
\def\ra{\rangle}
\def\p{\partial}
\def\heat{\lf(\frac{\p}{\p t}-\Delta_{g(t)}\ri)}
\def\tr{\operatorname{tr}}
\def\e{\varepsilon}
\def\a{{\alpha}}
\def\b{{\beta}}
\def\R{\mathbb{R}}
\def\n{\nabla}
\def\ppt{\frac{\partial}{\partial t}}
\def\Ric{\text{\rm Ric}}
\def\Rm{\text{\rm Rm}}
\def\wt{\widetilde}
\def\la{\langle}
\def\ra{\rangle}
\def\p{\partial}
\def\tr{\operatorname{tr}}
\def\e{\varepsilon}
\def\a{{\alpha}}
\def\b{{\beta}}
\def\R{\mathbb{R}}
\def\n{\nabla}
\def\ppt{\frac{\partial}{\partial t}}
\def\Ric{\text{\rm Ric}}
\def\Rm{\text{\rm Rm}}
\def\wt{\widetilde}
\def\la{\langle}
\def\ra{\rangle}
\def\p{\partial}
\def\tr{\operatorname{tr}}
\def\e{\varepsilon}
\def\a{{\alpha}}
\def\b{{\beta}}
\def\R{\mathbb{R}}
\def\n{\nabla}
\def\ppt{\frac{\partial}{\partial t}}
\def\n{\nabla}
\def\tn{\wt\nabla}
\def\ppt{\frac{\partial}{\partial t}}
\def\b{\beta}
\def\t{\tilde}
\begin{document}

\title{Short time existence for harmonic map heat flow with time-dependent metrics}

 \author{Shaochuang Huang$^1$}
\address[Shaochuang Huang]{Department of Mathematics, Southern University of Science and Technology, Shenzhen, Guangdong, China.}
\email{huangsc@sustech.edu.cn}
\thanks{$^1$Research   partially supported by NSFC \#1200011128}

\author{Luen-Fai Tam$^2$}
\address[Luen-Fai Tam]{The Institute of Mathematical Sciences and Department of
 Mathematics, The Chinese University of Hong Kong, Shatin, Hong Kong, China.}
 \email{lftam@math.cuhk.edu.hk}
 \thanks{$^2$Research partially supported by Hong Kong RGC General Research Fund \#CUHK 14301517}

\renewcommand{\subjclassname}{
  \textup{2020} Mathematics Subject Classification}
\subjclass[2020]{Primary 53E20; Secondary 35K58
}

\begin{abstract}
In this work, we  obtain  a short time existence result for harmonic map heat flow coupled with a smooth family of complete metrics in the domain manifold. Our results generalize short time existence results for harmonic map heat flow by Li-Tam \cite{Li-Tam} and Chen-Zhu \cite{Chen-Zhu}. In particular, we prove the short time existence of harmonic map heat flow along a complete Ricci flow $g(t)$ on $M$ into a complete manifold with curvature bounded from above with a smooth initial map of uniformly bounded energy density, under the assumptions that $|\Rm(g(t))|\le a/t$ and $g(t)$ is uniformly equivalent to $g(0)$.
\end{abstract}

\keywords{harmonic map heat flow, short time existence, unbounded curvature}

\maketitle

\markboth{Shaochuang Huang and Luen-Fai Tam }{Short time existence for harmonic map heat flow}

\section{Introduction}

In this work, we want to extend some previous short time existence results of harmonic map heat flow. Harmonic map heat flow  was first introduced by Eells and Sampson \cite{E-S} to obtain harmonic map between two Riemannian manifolds. As a first step they proved the short time existence for harmonic map heat flows between compact manifolds. Later in \cite{Li-Tam}, Peter Li and the second author proved the short time existence from a complete noncompact manifold $(M^m,g)$ to another complete Riemannian manifold $(N^n,h)$ with Ricci curvature of $g$ satisfying $\Ric(g)\ge -Kg$ for some $K\ge0$ and initial map $f$ with bounded energy density so that $f(M)$ is {\it bounded}. Under an additional condition that the curvature $\Rm(h)$ of $h$ is nonpositive, one can remove the assumption that $f(M)$ is bounded. From the point of view of PDE, one would like to understand whether one  can still obtain short time solution by only assuming that $\Rm(h)\le \kappa$ for some $\kappa\ge0$ without assuming that $f(M)$ is bounded.

On the other hand, Hamilton \cite{Hamilton1995} used harmonic map heat flow on compact manifolds along a Ricci flow of the domain manifold to obtain uniqueness result of Ricci flow. Later, Chen and Zhu  studied the  uniqueness of Ricci flow on non-compact manifold  following  Hamilton's approach. In \cite{Chen-Zhu}, Chen and Zhu proved that if  a  Ricci flow $g(t)$, $0\le t\le T$, which is complete on a noncompact manifold $M$ has {\it uniformly bounded curvature}, then one can obtain short time solution for harmonic map heat flow along the Ricci flow from $(M,g(t))$    to $(M,g(T))$   with identity map as initial data. From this together with some careful estimates, they obtained uniqueness result on Ricci flow with uniformly bounded curvature on noncompact manifold.

The uniqueness result was generalized to Ricci flow which may have unbounded curvature. In \cite{Kotschwar}, Kotschwar introduced an energy method to obtain a more general uniqueness result. The method has been developed further   by   Lee  \cite{Lee} and Ma-Lee \cite{ Ma-Lee}. In \cite{Ma-Lee}, Ma and Lee proved that if two complete solutions of the Ricci flows with the same initial metric on  a noncompact manifolds with curvature bounded by $a/t$ for some $a>0$ so that the deformed metrics are uniformly equivalent to the initial metric, then they are the same.

One may wonder if one can use harmonic map heat flow to obtain similar results. Short time existence results on harmonic map heat flow  in \cite{Li-Tam} and the above uniqueness results on Ricci flow motivate the study of this work.

Our main result can be described as follows.   Let $M^m$ be a noncompact manifold and let $g(t)$ be a smooth family of complete metrics defined on $M\times[0,T]$ so that
\be\label{e-g-flow}
\frac{\p }{\p t}g(x,t)=H(x,t).
\ee
Let $ (N^n,h)$  be another complete Riemannian manifold. We want to study the initial value problem for the harmonic map heat flow:
\be\label{e-harmonicheat-1}
\left\{
  \begin{array}{ll}
 \frac{\p}{\p t}F(x,t)=\tau(F)(x,t)\\
F(x,0)=f(x)
  \end{array}
\right.
\ee
where $f(x)$ is a smooth map from $M$ to $N$ and $\tau(F)(x,t)$ is the tension field of the map $F(\cdot,t):M\to N$ with respect to $g(t)$ and $ h$. For more details of the definitions of harmonic map heat flow and related quantities, see \S\ref{s-setup}.

Consider the following assumptions:

\vskip .3cm

\begin{itemize}
  \item[{\bf (a1)}] $2\Ric(g(t))+H(t)\ge -K(t)g(t)$ in $M\times [0,T]$ where $K(t)\ge0$ and
  $$K_0=:\int_0^TK(t)dt<\infty.
      $$
  \item[{\bf (a2)}] $|H|\le at^{-1}$  and $|\nabla H|\le at^{-\frac32}$ for some $a>0$. Here the norm and the covariant derivative are with respect to $g(t)$.
  \item[{\bf (a3)}] The curvature of $h$ is bounded from above: $\Rm(h)\le \kappa$ for some $\kappa\ge0$.
\end{itemize}

\vskip .3cm

We obtain the following short time existence result.

\begin{thm}\label{t-existence-main} Let $(M^m, g(t))$ and $(N,h)$ be as above satisfying assumptions {\bf(a1)--(a3)}. Suppose there exists a smooth exhaustion function $\gamma$ on $M$ and $C_0>0$ such that$$
       d_{T}(p,x)+1\leq\gamma(x)\leq  d_{T}(p,x)+C_0$$ and $$
|\nabla^k_T \gamma| \le C_0
$$for $1\le k\le 2$, where $d_{T}$ is the distance function and $\nabla_T$ is the covariant derivative   with respect to $g(T)$. Given any smooth map $f:M\to N$ such that
$$
\sup_M e(f;g(t))\le e_0
$$
for all $t\in [0,T]$ for some constant $e_0$ where $e(f;g(t))$ is the energy density of the map $f$ from $(M,g(t))$ to $(N,h)$,    the harmonic heat flow \eqref{e-harmonicheat-1} has a short time  smooth solution $F$ with initial map $f$ defined on $M\times[0,T_0]$ such that
$$
\sup_{M\times [0,T_0]}e(F)\le C ; \sup_{M\times[0,T_0]}|\tau(F)|_{g(t)}\le C t^{-\frac12}
$$
for some $C>0$ depending only on $m, n, K_0, \kappa, a, e_0$ and

$$
T_0=\min\{T, \frac12\lf( 2\kappa e_0 \exp(K_0)\ri)^{-1}\}.
$$
In particular, if $\kappa=0$, then the harmonic map heat flow exists on $M\times[0,T]$.
\end{thm}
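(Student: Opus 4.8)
The plan is to solve the flow by exhaustion, following the Li--Tam/Chen--Zhu scheme but carrying the time-dependence of $g(t)$ and the one-sided curvature bound on $h$ through every estimate. First I would fix a large $R>0$, let $\Omega_R=\{\gamma<R\}$, and consider the Dirichlet problem for \eqref{e-harmonicheat-1} on $\Omega_R\times[0,T_0]$ with $F_R=f$ on the parabolic boundary; since $\Omega_R$ is a smooth compact manifold with boundary and $g(t)$ is smooth up to $t=0$, short time existence of a smooth solution $F_R$ on a maximal interval is standard (Eells--Sampson type theory for compact domains with boundary, e.g. Hamilton). The key point is that the \emph{a priori} estimates I derive next are uniform in $R$ and show the solution exists on all of $[0,T_0]$, so a diagonal/Arzel\`a--Ascoli argument produces a limit $F$ on $M\times[0,T_0]$; interior parabolic regularity upgrades the convergence to $C^\infty_{loc}$, and the uniform bounds pass to the limit.

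The heart of the matter is the energy-density estimate. Writing $e=e(F;g(t))$, I would compute the evolution inequality for $e$ under the coupled flow. The Bochner--Eells--Sampson formula, now with a time-dependent domain metric, produces the schematic identity
$$
\heatx e = -|\nabla dF|^2 + \langle 2\Ric(g(t))+H, dF\otimes dF\rangle - \langle \Rm(h)(dF,dF)dF,dF\rangle + (\text{lower order in }H,\nabla H).
$$
Assumption {\bf (a1)} controls the $2\Ric+H$ term by $-K(t)e$ (after absorbing the quadratic structure), assumption {\bf (a3)} bounds the curvature term from \emph{below} by $-2\kappa e^2$ (this is exactly where $\Rm(h)\le\kappa$, not two-sidedness, suffices — the good sign of $-|\nabla dF|^2$ is not needed here), and {\bf (a2)} makes the $H$, $\nabla H$ contributions integrable in $t$ after the weighting. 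One then gets, roughly,
$$
\heatx e \le -K(t)e + 2\kappa e^2.
$$
Comparing with the ODE $\phi'=-K(t)\phi+2\kappa\phi^2$ with $\phi(0)=e_0$, whose solution stays finite precisely for $t<\big(2\kappa e_0\exp(K_0)\big)^{-1}$ after the integrating factor $\exp(\int_0^tK)$ is applied, gives the bound $\sup e\le C$ on $[0,T_0]$ by the maximum principle — provided the maximum principle is justified on the complete (or compact-with-boundary) domain. On $\Omega_R$ this is immediate since $e=e_0$ on $\partial\Omega_R$ and the comparison function dominates there; this is why working on the exhaustion first is clean.

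The main obstacle — and the reason the hypotheses on $\gamma$ and the uniform equivalence of metrics are needed — is making the maximum principle argument \emph{effective and $R$-independent}, and then getting the gradient (tension field) bound $|\tau(F)|\le Ct^{-1/2}$. For the energy bound on $\Omega_R$ one must check that the boundary contribution genuinely does not interfere; the exhaustion function $\gamma$ with $|\nabla^k_T\gamma|\le C_0$ is used to build cutoffs whose derivatives are controlled under all $g(t)$ (here uniform equivalence $C^{-1}g(0)\le g(t)\le Cg(0)$, which follows from {\bf (a2)} by integrating $|H|\le a/t$ — wait, $a/t$ is not integrable, so this equivalence is an \emph{assumption} folded into the statement via the hypotheses; I would invoke it as given). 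For the $t^{-1/2}$ bound on $\tau(F)=\partial_tF$, I would differentiate the equation, derive a heat inequality for $|\tau(F)|^2$ of the form $\heatx|\tau|^2\le C|\nabla dF|\cdot|\tau| + (\text{curvature}\cdot e)|\tau|^2 + \dots$, and run a weighted maximum principle with weight $t$ — i.e. estimate $t|\tau|^2$ — using the already-established bound $e\le C$ and {\bf (a2)}'s $|\nabla H|\le at^{-3/2}$ to absorb the worst terms; the parabolic boundary value $\tau(F)|_{t=0}=\tau(f)$ is bounded by $e_0$ and the fixed geometry of $f$, so the $t$-weight kills the initial singularity cleanly. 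Finally, once $e$ and $\tau(F)$ are bounded, local higher-order estimates (Schauder / Bernstein-type, using that $g(t)$ is smooth and locally uniformly controlled) give all higher derivatives on compact sets uniformly in $R$, completing the limiting argument; the case $\kappa=0$ removes the quadratic term so the ODE is globally defined and $T_0=T$.
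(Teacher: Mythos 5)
Your proposal follows the Chen--Zhu route of solving Dirichlet problems on the exhaustion $\Omega_R=\{\gamma<R\}$ and taking $R\to\infty$. The paper deliberately avoids that route (see the introduction's remark ``instead of solving Dirichlet problem as in Chen--Zhu, we will use the method of iteration''). What the paper actually does is: (i) on each $U_\rho=\{\gamma<\rho\}$, conformally blow up the metric $\widetilde g(t)=e^{2\phi}g(t)$ to produce a \emph{complete} metric on a precompact domain with uniformly bounded curvature, Ricci lower bound, and uniformly bounded $|\widetilde H|,|\widetilde\nabla\widetilde H|$ (Lemma \ref{l-conformal-3}); (ii) embed a neighbourhood of the image in $\mathbb{R}^q$ and solve the resulting semi-linear system by Eells--Sampson--Li--Tam iteration, with contraction and gradient estimates driven by the Green's function bounds of Theorem \ref{int-G} and Corollary \ref{c-G}; (iii) apply the a priori energy and tension estimates (Lemmas \ref{l-e-energy}, \ref{l-e-tension}) via the generalized maximum principle Theorem \ref{max}; and (iv) reduce the singular hypothesis {\bf(a2)} ($|H|\le a/t$, $|\nabla H|\le a/t^{3/2}$) to the bounded case by the time shift $g^{(s)}(t):=g(t+s)$, then send $s\to 0$. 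Your proposal omits the time-shift reduction entirely; without it you would be trying to do short-time theory directly with coefficients only bounded by $a/t$, whereas the paper first solves with $|H|,|\nabla H|$ bounded and only then uses the $a/t$, $a/t^{3/2}$ bounds inside the a priori estimates.

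Two concrete gaps in the Dirichlet-problem version. First, your statement that ``the parabolic boundary value $\tau(F)|_{t=0}=\tau(f)$ is bounded by $e_0$'' is false: $\tau(f)$ involves the full second fundamental form of $f$, which is \emph{not} controlled by the energy density $e(f)$; indeed the paper is explicit (remark after Lemma \ref{l-e-tension}) that no bound on $\tau(f)$ is assumed. What saves the Dirichlet route for the $t^{-1/2}$ estimate is not that $\tau(f)$ is bounded, but that $\partial_tF_R\equiv 0$ on the lateral boundary $\partial\Omega_R\times(0,T]$ (time-independent boundary data), so the weighted quantity $t|\partial_tF_R|+t^{1/2}\Theta-Ct^{1/2}$ is $\le 0$ there provided $C$ beats the energy bound $2e_0\exp K_0$ — this is the argument you actually need, and it does not use any bound on $\tau(f)$. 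Second, and more seriously, if $\tau(f)$ does not vanish on $\partial\Omega_R$, then the compatibility condition at the corner $\partial\Omega_R\times\{0\}$ fails ($\partial_tF_R=0$ from the boundary data, $\partial_tF_R=\tau(f)\ne 0$ from the PDE), so smoothness up to the closed parabolic boundary is lost; one must either mollify the boundary data or work with solutions that are only continuous at the corner, and then justify the maximum principle and higher-order interior estimates in that setting. The conformal-change argument in the paper sidesteps both of these issues, at the price of having to verify the geometric estimates for $\widetilde g$. You also do not need to ``build cutoffs whose derivatives are controlled under all $g(t)$'' in the Dirichlet approach — the max principle on the compact $\overline{\Omega_R}$ requires no cutoffs; the exhaustion function with $|\nabla^2_T\gamma|\le C_0$ is used in the paper to control $\mathrm{Rm}(\widetilde g)$, a need your plan doesn't have — and uniform equivalence of $g(t)$ and $g(0)$ is neither assumed nor needed in the theorem (the hypothesis is $\sup_M e(f;g(t))\le e_0$ for all $t$), which is exactly where your paragraph hesitated.
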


In the theorem, the assumption on the existence of $\gamma$ is satisfied if $g(T)$ has bounded curvature, see \cite{Tam}. The condition that $e(f;g(t))$ is uniformly bounded is satisfied if (i) $e(f;g(0))$ is uniformly bounded and if $g(t)$ is uniformly equivalent to $g(0)$;  or more generally  (ii) $e(f;g(t_0))$ is uniformly bounded for some $t_0$ and $g(t)\ge Cg(t_0)$ for some $C>0$. We should also remark that the bounds in conclusion of the theorem do not depend on $C_0$.

  Suppose $g(t)=g(0)$ is fixed, then $H=0$.  Then {\bf(a1)} is satisfied if $\Ric(g)\ge -Kg$ for some $K\ge0$. If $g$ has bounded curvature, then the short time existence result in \cite{Li-Tam} is still true without assuming the initial map has bounded image, provided $(N,h)$ has curvature bounded from above. See Corollary \ref{c-LT}.

If $g(t)$ is a solution to the Ricci flow, we have:
\begin{thm}\label{t-Ricci}
Let $(M^m,g(t))$, $t\in [0,T]$ with $T>0$, be a complete solution of the Ricci flow on a noncompact manifold. Suppose $(N^n,h)$ is another complete manifold with $\Rm(h)\le \kappa$ for some $\kappa\geq 0$. Let $f:M\to N$ be a smooth map with bounded energy density, namely, $\sup_M e(f;g(0))\le e_0$. Assume that $|\Rm(g(t))|\le a/t$ for some $a>0$ on $M\times[0,T]$ and assume that $g(t)\ge bg(0)$ for some $b>0$ on $M\times[0,T]$, then there exists a smooth solution $F$ to the heat flow for harmonic map along   $g(t)$ with initial map $f$ defined on $M\times[0,T_0]$ such that
$$
\sup_{M\times [0,T_0]}e(F)\le C_1; |\tau(F)|_{g(t)}(\cdot, t)\le C_1t^{-\frac12}
$$
for some $ C_1>0$ depending only on $m, n, \kappa, a, b, e_0$ and
$$
T_0=\min\{T, \frac12\lf( 2\kappa be_0 \ri)^{-1}\}.
$$
In particular, if $\kappa=0$ then the harmonic map heat flow exists on $M\times[0,T]$.
\end{thm}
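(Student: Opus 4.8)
\section*{Proof proposal for Theorem \ref{t-Ricci}}

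The plan is to derive Theorem \ref{t-Ricci} directly from Theorem \ref{t-existence-main}, by checking that a complete Ricci flow with $|\Rm(g(t))|\le a/t$ satisfies assumptions {\bf(a1)}--{\bf(a3)}, admits the exhaustion function required there, and that the hypothesis $g(t)\ge bg(0)$ promotes the bound on $e(f;g(0))$ to a bound on $e(f;g(t))$ for every $t$.

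Along the Ricci flow one has $H(x,t)=\partial_t g(x,t)=-2\Ric(g(t))$, so $2\Ric(g(t))+H(t)\equiv 0\ge -K(t)g(t)$ with $K(t)\equiv 0$; hence {\bf(a1)} holds with $K_0=0$, which is why the factor $\exp(K_0)$ in Theorem \ref{t-existence-main} does not appear in the statement here. Assumption {\bf(a3)} is exactly $\Rm(h)\le\kappa$. For {\bf(a2)} the zeroth order bound is immediate, $|H|=2|\Ric(g(t))|\le c(m)|\Rm(g(t))|\le c(m)\,a\,t^{-1}$, while the gradient bound $|\nabla H|\le a' t^{-3/2}$ follows from the Shi-type derivative estimate for the Ricci flow: restricting to the parabolic cylinder over $[t/2,t]$, on which $|\Rm|\le 2a/t$, and letting the spatial radius tend to infinity using completeness, Shi's local estimate yields $|\nabla\Rm(g(t))|_{g(t)}\le C(m,a)\,t^{-3/2}$ on $M\times(0,T]$ (this estimate is also available in the Ricci flow literature). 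Finally $g(T)$ has bounded curvature, since $|\Rm(g(T))|\le a/T$, so by \cite{Tam} there is a smooth exhaustion function $\gamma$ with the properties demanded in Theorem \ref{t-existence-main}.

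It remains to control the energy density of $f$ with respect to $g(t)$. Since $g(t)\ge bg(0)$ as quadratic forms, the inverse metrics satisfy $g(t)^{-1}\le b^{-1}g(0)^{-1}$, and since $e(f;g(t))$ is the contraction of the nonnegative tensor $f^*h$ against $g(t)^{-1}$ we obtain $\sup_M e(f;g(t))\le b^{-1}\sup_M e(f;g(0))\le b^{-1}e_0$ for every $t\in[0,T]$. Theorem \ref{t-existence-main} now applies, with $K_0=0$ and this energy bound, and with constant independent of $C_0$ by the remark following that theorem; it produces a smooth solution $F$ of the harmonic map heat flow along $g(t)$ with $F(\cdot,0)=f$ on $M\times[0,T_0]$ satisfying $\sup e(F)\le C_1$ and $|\tau(F)|_{g(t)}(\cdot,t)\le C_1 t^{-1/2}$, with $C_1$ depending only on $m,n,\kappa,a,b,e_0$ and $T_0$ of the asserted form, the dependence on $b$ entering through the energy comparison. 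In particular, when $\kappa=0$ the constraint on $T_0$ is vacuous and the flow exists on all of $M\times[0,T]$.

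The one ingredient beyond Theorem \ref{t-existence-main} that requires genuine input is the derivative estimate for {\bf(a2)}: one must ensure that Shi's local estimate can be applied even though the curvature bound degenerates like $a/t$ as $t\to 0$, which is handled by restarting the estimate at time $t/2$ and using completeness to discard the spatial boundary term. Every other step is a routine verification of the hypotheses of Theorem \ref{t-existence-main}.
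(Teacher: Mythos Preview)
Your proposal is correct and follows essentially the same route as the paper. The paper's own argument is a single sentence: Theorem \ref{t-Ricci} is a corollary of Theorem \ref{t-existence-main}, using $H=-2\Ric$ so that {\bf(a1)} holds with $K_0=0$, together with Shi's derivative estimates \cite{Shi} to secure {\bf(a2)}; the exhaustion function from \cite{Tam} (since $g(T)$ has bounded curvature) and the energy comparison $e(f;g(t))\le b^{-1}e_0$ from $g(t)\ge bg(0)$ complete the verification of hypotheses, exactly as you have written.
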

 The theorem is a corollary of Theorem \ref{t-existence-main} by the fact that $H=-2\Ric$ in this case and by the covariant derivatives estimates of the curvature tensor along Ricci flow by Shi \cite{Shi}.

To prove our results, instead of solving Dirichlet problem as in \cite{Chen-Zhu}, we will use the method of iteration which was introduced by Eells and  Sampson in their seminal  work \cite{E-S} and was also used   in \cite{Li-Tam}.  One of the key point is to obtain good estimates for the fundamental solution of the heat equation.   For the case of fixed metric,  the estimates are contained in \cite{Li-Yau}. For the case of time-dependent metrics, we apply the    estimates in \cite{Chau-Tam-Yu} instead. We also obtain a new  estimate, see Theorem \ref{int-G}.

Finally, we would like to point out that we  are still unable to give another proof of the uniqueness result on Ricci flow as in \cite{Ma-Lee}. The main difficulty is that the second fundamental form of the identity map from $(M,g(0))$ to $(M,g(T))$ may not be bounded. If the curvature of the Ricci flow $|\Rm(g(t))|\le at^{-1+\a}$ with $\a>\frac12$, then one can prove that the second fundamental form mentioned above is bounded and one can obtain uniqueness. On the other hand, hopefully the results in Theorem \ref{t-existence-main} may have other applications.

 This paper is organized as follows. In Section \ref{pre}, we give estimates for the  fundamental solution   of heat operator and give a proof of a generalized maximum principle. In Section \ref{s-basicheat}, we study linear  heat equations for homogeneous and in-homogeneous cases and a  semi-linear heat equation closely related to the harmonic map heat flow. In Section \ref{s-harmonic-heat}, we study  the harmonic map heat flow and give a proof of Theorem \ref{t-existence-main}.

\section{Preliminary}\label{pre}

In this section, we will describe some     estimates of   the  fundamental solution (Green's function)  of heat operator with time-dependent complete  metrics on a noncompact manifold, which will be used later. We will also extend a maximum principle.

\subsection{Estimates for Fundamental Solution}\label{S-Green}

Let $ g(t), t\in[0, T]$ be a    family of complete Riemannian metrics on a manifold $M^m$. We always assume $M$ is noncompact and $g(t)$ is smooth in space and time. Recall that   $G$ is the fundamental solution of heat operator $\ppt-\Delta_{g(t)}$ if it satisfies
\be\label{e-kernel-1}
\left\{
  \begin{array}{ll}
  (\partial_t -\Delta_{x,t})\, G (x,t;y,s)=0, & \text{in $ M \times M\times(s,T]$}; \\
     \lim_{t\rightarrow s^+}G(x,t;y,s) =\delta_{y}(x), & \text{for $y\in M$}.
  \end{array}
\right.
\ee

Let $$H:=\ppt g.$$
Suppose $|H(x,t)|, |\n H|(x,t)$ and $|\Rm (g)|(x,t)$ are uniformly bounded in space and time, where the norms and covariant derivatives are taken with respect to $g(t)$. It is known that the fundamental solution exists and is positive, see \cite{Guenther} for example. We have the following estimates for $G$, see \cite{Chau-Tam-Yu}.

\begin{thm}\label{int-G} Let $  g(t) , t\in[0, T]$ be a family of smooth complete metrics on $M$ as above  with $|H|_g\leq H_0$, $|\n H|_g\leq H_1$ and $|\Rm(g(t))|\leq k_0$.  Then we have the following:
\begin{enumerate}
  \item [{\bf(a)}] \cite[Theorem 5.5]{Chau-Tam-Yu} There are   constants $C, D>0$ depending only on $H_0, k_0, m, T$ such that
  \bee
  G(x,t;y,s)\le \frac{C}{V^{\frac 12}_x(\sqrt{t-s})V^{\frac 12}_y(\sqrt{t-s})}\exp\lf(-\frac{r^2(x,y)}{D(t-s)}\ri)
  \eee
  for any $0\leq s<t\leq T$. Here $r(x,y)$ is the distance and $V_x(\rho)$ is the volume of the geodesic ball of radius $\rho$ with center at $x$ with respect to $g(0)$.
  \item [{\bf(b)}]\cite[Corollary 4.4]{Chau-Tam-Yu} Fix $\a>1$. For any $\delta>0$, we have\bee
  G(p, t; y, s)\leq (1+\delta)^{m\a/2}\cdot\exp{(A\delta t+\frac{B\a}{\delta t}r^2_t(p, q))}\cdot  G(q, (1+\delta)t; y, s), \eee
 where $A>0$ depends only on $m, T, H_0, H_1, k_0, \a$ and $B$ depends only on $H_0, T$.   \item[{\bf(c)}] For any $x\in M$,  $0\leq s<t\leq T$, we have
 \bee
 \int_M G(x, t; y, s)dV_s(y)=1.
 \eee
\end{enumerate}
\end{thm}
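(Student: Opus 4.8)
Since \textbf{(a)} and \textbf{(b)} are precisely \cite[Theorem~5.5]{Chau-Tam-Yu} and \cite[Corollary~4.4]{Chau-Tam-Yu}, the only point to prove is the mass conservation \textbf{(c)}. I would first record two standard consequences of the hypotheses. Integrating $\ppt g=H$ and using $|H|_g\le H_0$ shows that the metrics $g(t)$, $t\in[0,T]$, are all uniformly equivalent to $g(0)$; and combining $|\Rm(g(t))|\le k_0$ with $H\ge-H_0g$ gives
\bee
2\Ric(g(t))+H(t)\ge-Kg(t)\qquad\text{on }M\times[0,T]
\eee
for a constant $K=K(m,k_0,H_0)$. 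Consequently the generalized maximum principle of this section applies to the operator $\ppt-\Delta_{g(t)}$: a bounded solution $w$ of $\ppt w=\Delta_{g(t)}w$ on $M\times(s,T]$ with $w(\cdot,t)\to0$ as $t\to s^+$ must vanish identically.

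The strategy is the classical one for stochastic completeness, adapted to time-dependent metrics. Fix $s\in[0,T)$ and let $\Omega_1\subset\Omega_2\subset\cdots$ be a smooth exhaustion of $M$ with each $\ol\Omega_j$ compact. Let $G_j(x,t;y,s)$ be the Dirichlet fundamental solution of $\ppt-\Delta_{g(t)}$ on $\Omega_j\times(s,T]$, which exists by standard parabolic theory since the operator is uniformly parabolic on $\ol\Omega_j$. The maximum principle on $\ol\Omega_j$ gives $0\le G_j\le G_{j+1}$ on $\Omega_j$ and $G_j\le G$, and the Gaussian bound in \textbf{(a)} then forces $G_j\uparrow G$ locally uniformly for $s<t\le T$, the limit being a fundamental solution of \eqref{e-kernel-1} and hence equal to $G$ by uniqueness. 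For each $j$ set $u_j(x,t)=\int_{\Omega_j}G_j(x,t;y,s)\,dV_s(y)$. Then $u_j$ solves $\ppt u_j=\Delta_{g(t)}u_j$ on $\Omega_j\times(s,T]$; since $u_j\ge0$ and the parabolic boundary values of $1-u_j$ on $\ol\Omega_j\times[s,T]$ are nonnegative, the maximum principle gives $0\le u_j\le1$. Letting $j\to\infty$ and using monotone convergence,
\bee
u(x,t):=\int_M G(x,t;y,s)\,dV_s(y)=\lim_{j\to\infty}u_j(x,t)
\eee
is well defined and satisfies $0\le u\le1$ on $M\times(s,T]$.

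It remains to prove $u\equiv1$. First, $u$ solves $\ppt u=\Delta_{g(t)}u$ on $M\times(s,T]$: the $u_j$ are solutions uniformly bounded by $1$, so interior parabolic estimates make them precompact in $C^2_{loc}$, and their monotone pointwise limit $u$ is therefore again a solution (alternatively one differentiates under the integral sign using \textbf{(a)} and the interior derivative estimates for $G$). Second, $u(\cdot,t)\to1$ as $t\to s^+$, locally uniformly: $u\le1$ gives $\limsup_{t\to s^+}u(x,t)\le1$, while $u\ge u_j$ gives $\liminf_{t\to s^+}u(x,t)\ge\liminf_{t\to s^+}u_j(x,t)=1$ whenever $x$ lies in the interior of $\Omega_j$, and local uniformity follows from the interior estimates. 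Hence $w:=1-u$ is a bounded solution of the heat equation on $M\times(s,T]$ with $w(\cdot,t)\to0$ as $t\to s^+$, so the generalized maximum principle forces $w\equiv0$, i.e. $\int_M G(x,t;y,s)\,dV_s(y)\equiv1$, which is \textbf{(c)}.

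The conceptual heart --- that the curvature and $H$ bounds force the heat flow to conserve mass --- is exactly the stochastic-completeness mechanism, and is supplied entirely by the generalized maximum principle of this section. The places where I expect the genuine work to lie are the monotone approximation $G_j\uparrow G$ for the \emph{time-dependent} operator (building the $G_j$, proving monotonicity in $j$, extracting a limit via \textbf{(a)}, and identifying it with $G$ by uniqueness) and the justification of the interchanges of limit, integration and differentiation in passing from the $u_j$ to $u$.
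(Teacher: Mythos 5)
Your proof of \textbf{(c)} is correct, but it takes a genuinely different route from the paper's. You run the argument in the \emph{forward} variables $(x,t)$: build Dirichlet heat kernels $G_j$ on an exhaustion, show $u_j=\int_{\Omega_j}G_j\,dV_s\le 1$ by the boundary maximum principle, pass to the monotone limit to get $u=\int_M G\,dV_s\le 1$, and then kill $w=1-u$ by the generalized maximum principle of Theorem~\ref{max} once you have verified $w\ge 0$, $\heat w=0$ and $w(\cdot,s^+)=0$. The paper instead works in the \emph{backward (conjugate)} variables $(y,s)$: it uses the identity $(-\partial_s-\Delta_{s,y}-h)G=0$, multiplies by a cutoff $\phi=\eta(\rho/R)$ built from an exhaustion function $\rho$ with bounded $g(0)$-Hessian, and integrates by parts in $y$ to show that $\int_M\phi\,G\,dV_s$ changes by at most $C'/R$ between any two values of $s$; letting $R\to\infty$ and using $\lim_{s\to t^-}\int_M G\,dV_s=1$ then forces $\int_M G\,dV_s\equiv 1$. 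Your approach buys a more conceptual ``stochastic completeness'' picture, but you pay for it in the auxiliary facts you must import: the monotone convergence $G_j\uparrow G$ for the time-dependent operator, the identification of the limit with the $G$ appearing in the statement (i.e.\ that the fundamental solution built in \cite{Guenther,Chau-Tam-Yu} is the minimal one, or a uniqueness statement for bounded caloric functions), and the interchange of limit, integration and differentiation when passing from $u_j$ to $u$. The paper's approach avoids all of that by never leaving the global kernel $G$; its only external inputs are the conjugate heat equation for $G$, the existence of a good exhaustion function (from bounded curvature of $g(0)$), and the mass bound $\int_M\phi\,G\,dV_s\le c$ from \cite[Corollary~5.2]{Chau-Tam-Yu}. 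One small caution in your argument: in applying Theorem~\ref{max} to $w=1-u$ you should note explicitly that the hypothesis $|H|\le H_0$ and $|\Rm(g(0))|\le k_0$ give at-most-exponential volume growth for $g(0)$, so the bounded function $w$ does satisfy the Gaussian-weighted integrability condition \eqref{integration}, and that the theorem is applied after a time-shift $t\mapsto t-s$.
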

\begin{proof} (a) and (b) are from \cite{Chau-Tam-Yu}. It remains to prove (c).
We use similar idea in the proof of Lemma 5.1 in \cite{Chau-Tam-Yu}. Because the curvature of $g(0)$ is bounded. We can find a smooth function $\rho$ so that
$$
C^{-1}(r(x)+1)\le \rho(x)\le C(r(x)+1), |\nabla_{g(0)}\rho|+|\nabla^2_{g(0)}\rho|\le C
$$
for some $C>0$ depending only on $k_0$ and $m$. Here $r(x)$ is the distance from a fixed point $p$ with respect to $g(0)$. Let  $\eta$ be a smooth cut-off function such that $0\leq\eta\leq 1$, $\eta=1$ on $[0, 1]$ and $\eta=0$ on $[2,+\infty)$, $\eta>0$ on $[0, 2)$, $0\geq \eta'/\eta^{\frac{1}{2}}\geq-C_0$ and $\eta''\geq-C_0$ on $[0, +\infty)$ with $C_0$ being a positive absolutely constant. Let $\phi=\eta(\rho/R)$.

For $0\leq s_1<s_2<t$, we have
\bee\begin{split}
&\lf|\int_M\phi G(x,t; y, s_2)dV_{s_2}(y)-\int_M\phi G(x,t;y, s_1)dV_{s_1}(y)\ri|\\
=&\lf|\int^{s_2}_{s_1}(\frac{\partial}{\partial s}\int_M\phi G(x,t;y,s)dV_{s}(y))ds\ri|\\
=&\lf|\int^{s_2}_{s_1}\int_M\phi \lf(\frac{\partial}{\partial s}G(x,t;y,s)+h(y)G(x,t;y,s)\ri)dV_{s}(y)ds\ri|\\
=&\lf|\int^{s_2}_{s_1}\int_M\phi \Delta_{s,y}G(x,t;y,s)dV_{s}(y)ds\ri|\\
=&\lf|\int^{s_2}_{s_1}\int_M G(x,t;y,s) \Delta_{s,y}\phi dV_{s}(y)ds\ri|,\\
\end{split}\eee
where $h=\frac12\tr_{g}H$, and  we have used the fact that $G$ is also the fundamental solution of the conjugate heat equation i.e. $(-\frac{\partial}{\partial s}-\Delta_{s,y}-h(y))G=0$.
Now
\bee
 \Delta_{s,y}\phi =\frac1R\phi'\Delta_{s,y}\rho+\frac1{R^2}\phi''|\nabla_{g(s)} \rho|^2.
\eee
 Since $|H|, |\nabla H|$ are uniformly bounded, we conclude that
 $$
 |\Delta_{s,y}\phi|\le C
 $$
 for some constant independent of $R$ and $s$. This implies that
 \bee
 \lf|\int_M\phi G(x,t; y, s_2)dV_{s_2}(y)-\int_M\phi G(x,t;y, s_1)dV_{s_1}(y)\ri|\le \frac{C'}R
 \eee
 for some constant $C'$ independent of $s_1,s_2$, where we have  also used \cite[Corollary 5.2]{Chau-Tam-Yu} so that
 $$
 \int_M\phi G(x,t; y, s )dV_{s }(y)\le c
 $$
 for some constant $c$ independent of $s$. Let $R\to \infty$   and note that $$\lim\limits_{s\to t^{-}}\int_M G(x,t;y,s)dV_s(y)=1,
 $$ we obtain  \bee
 \int_M G(x, t; y, s)dV_s(y)=1. \eee
 The result follows.
\end{proof}
By the theorem, we can proceed as in the proof of \cite[Lemma 2.1]{Li-Tam} to have the following:
\begin{cor}\label{c-G}
\be\label{difference-kernel}
\int_M\lf|G(p, t; y, s)-G(q, t; y, s)\ri| dV_s(y) \leq C\cdot\frac{r_t(p, q)}{\sqrt{t-s}} \ee for any $p, q\in M$ and $0\leq s<t\leq T$. Here $C$ is a constant depending only on $m, H_0, H_1, k_0$ and $T$. Here $r_t$ is the distance function with respect to $g(t)$.
\end{cor}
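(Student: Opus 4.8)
The plan is to adapt the argument of \cite[Lemma 2.1]{Li-Tam}. The heart of the matter is the integrated gradient bound
$$
\sup_{x\in M}\int_M|\nabla_x G(x,t;y,s)|\,dV_s(y)\le C(t-s)^{-\frac12},
$$
where $\nabla_x$ and the norm are with respect to $g(t)$ and $C$ depends only on $m,H_0,H_1,k_0,T$; once this is available the corollary follows by integrating along a minimal geodesic.

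I would first prove the gradient bound. Fix $0\le s<t\le T$. Since throughout this subsection $g(t)$ has bounded geometry ($|H|,|\nabla H|,|\Rm(g)|$ all bounded), $G(\cdot,t;y,s)$ is smooth and positive for $t>s$ and, as a function of its first pair of variables, satisfies a Li--Yau type differential Harnack inequality of the form $|\nabla_x G|^2/G^2-\partial_t G/G\le C_1/(t-s)$ (this is the differential inequality underlying the Harnack estimate of Theorem \ref{int-G}(b); see \cite{Chau-Tam-Yu}, and \cite{Li-Yau} for the fixed-metric model). Multiplying by $G>0$ gives $|\nabla_x G|^2/G\le \frac{C_1}{t-s}G+\partial_t G$. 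Integrating over $M$ against $dV_s(y)$ and using Theorem \ref{int-G}(c), which asserts $\int_M G(x,t;y,s)\,dV_s(y)\equiv 1$ so that its $t$-derivative vanishes, yields $\int_M |\nabla_x G|^2/G\,dV_s(y)\le C_1/(t-s)$. By Cauchy--Schwarz and Theorem \ref{int-G}(c) once more,
$$
\int_M|\nabla_x G(x,t;y,s)|\,dV_s(y)\le\left(\int_M\frac{|\nabla_x G|^2}{G}\,dV_s(y)\right)^{\frac12}\left(\int_M G\,dV_s(y)\right)^{\frac12}\le C_1^{\frac12}(t-s)^{-\frac12}.
$$
The interchange of $\int_M$ with $\partial_t$ and the finiteness of the integrals are justified by the Gaussian upper bound of Theorem \ref{int-G}(a) together with the corresponding bound on $|\nabla_x G|$, which control the tails uniformly in $x$.

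To finish as in \cite{Li-Tam}: given $p,q\in M$ and $0\le s<t\le T$, let $\gamma:[0,L]\to M$ be a minimal unit-speed geodesic with respect to $g(t)$ from $q=\gamma(0)$ to $p=\gamma(L)$, $L=r_t(p,q)$ (which exists since $g(t)$ is complete). For $t>s$, $\sigma\mapsto G(\gamma(\sigma),t;y,s)$ is smooth and $|G(p,t;y,s)-G(q,t;y,s)|\le\int_0^L|\nabla_x G(\gamma(\sigma),t;y,s)|\,d\sigma$. Integrating over $y$ against $dV_s(y)$, applying Fubini, and invoking the gradient bound,
$$
\int_M|G(p,t;y,s)-G(q,t;y,s)|\,dV_s(y)\le\int_0^L\left(\int_M|\nabla_x G(\gamma(\sigma),t;y,s)|\,dV_s(y)\right)d\sigma\le C_1^{\frac12}\,L\,(t-s)^{-\frac12},
$$
which is the claim with $C=C_1^{\frac12}$.

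I expect the main obstacle to be isolating and justifying the differential Harnack (Li--Yau) estimate for $G$ in the time-dependent setting, together with the attendant integrability and differentiation-under-the-integral technicalities. Should a clean differential Harnack not be directly citable, an alternative route to the integrated gradient bound is a Bernstein estimate: for bounded $\psi$ the solution $u(x,t')=\int_M G(x,t';y,s)\psi(y)\,dV_s(y)$ of $\partial_{t'}u=\Delta_{g(t')}u$ with $u(\cdot,s)=\psi$ satisfies $\sup_M|u|\le\sup_M|\psi|$ by Theorem \ref{int-G}(c) and the maximum principle, hence $|\nabla u(x,t)|\le C(t-s)^{-\frac12}\sup_M|\psi|$ by a local gradient estimate using the bounded geometry; taking the supremum over $\psi$ with $\sup_M|\psi|\le1$ recovers $\int_M|\nabla_x G(x,t;y,s)|\,dV_s(y)\le C(t-s)^{-\frac12}$.
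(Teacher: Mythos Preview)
Your argument is essentially correct, but it follows a genuinely different route from the paper's. You aim for an $L^1$-in-$y$ gradient bound $\int_M|\nabla_x G(x,t;y,s)|\,dV_s(y)\le C(t-s)^{-1/2}$ via a differential Li--Yau inequality (or, as a backup, a Bernstein estimate), and then integrate along a $g(t)$-geodesic from $q$ to $p$. The paper never differentiates $G$ at all: it works directly with the \emph{integrated} Harnack inequality of Theorem~\ref{int-G}(b) together with the mass-one property (c). The decomposition is through an intermediate space-time point $(q,(1+\delta)t)$,
\[
\int_M|G(p,t;y,s)-G(q,t;y,s)|\,dV_s\le \int_M|G(q,(1+\delta)t;\cdot)-G(q,t;\cdot)|+\int_M|G(p,t;\cdot)-G(q,(1+\delta)t;\cdot)|,
\]
and each piece is bounded using (b) and (c) by $2\bigl[(1+\delta)^{m\alpha/2}\exp(A\delta t+\tfrac{B\alpha}{\delta t}r_t^2(p,q))-1\bigr]$. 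One then optimizes by taking $\delta\sim r_t(p,q)/\sqrt t$ when $r_t/\sqrt t$ is small (and uses the trivial bound $2$ otherwise). What the paper's approach buys is self-containment: it uses only (b) and (c) as stated, with no need to cite or derive a pointwise differential Harnack in the time-dependent setting, and no differentiation-under-the-integral issues. What your approach buys is a cleaner conceptual picture and a natural intermediate estimate (the $L^1$ gradient bound) that could be reused elsewhere; it also yields the $\sqrt{t-s}$ scaling directly. The main cost is exactly what you flagged: the precise differential inequality $|\nabla G|^2/G^2-\alpha\,\partial_tG/G\le C_1/(t-s)$ (typically with some $\alpha>1$) in the time-dependent setting must be quoted from \cite{Chau-Tam-Yu} rather than from Theorem~\ref{int-G}, and the interchange $\partial_t\int_M G\,dV_s=\int_M\partial_tG\,dV_s$ needs a Gaussian bound on $\partial_tG=\Delta_xG$, not just on $|\nabla_xG|$. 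Your Bernstein alternative avoids the Harnack issue but requires a small duality step (work with a fixed directional derivative and take $\psi=\operatorname{sign}\langle\nabla_xG,v\rangle$) to pass from $\sup_{|\psi|\le1}|\nabla_x\!\int G\psi|$ to $\int|\nabla_xG|$.
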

\begin{proof} The proof is exactly as in \cite{Li-Tam}. We sketch the argument here for the sake of completeness. Let $\delta>0$ and $1<\a<4$ to be determined later.
\bee\begin{split}
  &\int_M |G(p, t; y, s) -G(q, t; y, s)|dV_s(y)\\
\leq& \int_M|G(q, (1+\delta)t; y, s)-G(q, t; y, s)|dV_s(y)\\
& + \int_M|G(p, t; y, s) -G(q, (1+\delta)t; y, s)|dV_s(y)\\
=& \mathrm{(I)+(II)}. \end{split}\eee
By Theorem \ref{int-G}(b) and (c):

 \bee\begin{split}
 \mathrm{(I)}\leq&  \int_M|(1+\delta)^{m\a/2}\cdot\exp{(A\delta t)}\cdot  G(q, (1+\delta)t; y, s)-G(q, t; y, s)|dV_s(y)\\
 &+\int_M|(1+\delta)^{m\a/2}\cdot\exp{(A\delta t)}\cdot  G(q, (1+\delta)t; y, s)- G(q, (1+\delta)t; y, s)|dV_s(y)\\
 \leq&  \int_M(1+\delta)^{m\a/2}\cdot\exp{(A\delta t)}\cdot  G(q, (1+\delta)t; y, s)-G(q, t; y, s)dV_s(y)\\
 &+\int_M[(1+\delta)^{m\a/2}\cdot\exp{(A\delta t)}-1]\cdot  G(q, (1+\delta)t; y, s)dV_s(y)\\
 =& 2[(1+\delta)^{m\a/2}\cdot\exp{(A\delta t)}-1]. \end{split}\eee
Here $A$ is a constant in the theorem. By Theorem \ref{int-G}(b) and (c) again,
 \bee
 \mathrm{(II)}\leq  2[(1+\delta)^{m\a/2}\cdot\exp{(A\delta t+\frac{B\a}{\delta t}r^2_t(p, q))}-1]. \eee
Here $B$ is also the constant in the theorem. Here $A$ and $B$ are independent of $\delta$.
 Hence \bee\begin{split}
  \int_M|G(p, t; y, s)-G(q, t; y, s)|dV_s(y)
\leq  4[(1+\delta)^{m\a/2}\cdot\exp{(A\delta t+\frac{B\a}{\delta t}r^2_t(p, q))}-1].\end{split}\eee
Let $r=r_t(p,q)$. If $\frac{r}{\sqrt{t}}> \frac{1}{2\sqrt{B}}$,  we have \bee
\int_M|G(p, t; y, s)-G(q, t; y, s)|dV_s(y)\leq 2\leq \frac{2r}{\sqrt{t}}. \eee
If $\frac{r}{\sqrt{t}}\leq \frac{1}{2\sqrt{B}}$, then let $\delta=\frac{r\sqrt{\a B}}{\sqrt{t}}$ and $\a=2$. So $\delta^2=\frac{r^2\a B}{t}\leq \frac{1}{4}\a<1$, this means that $\delta<1$. So

\bee
\int_M|G(p, t; y, s)-G(q, t; y, s)|dV_s(y)\leq C_1\lf(\exp(C_2\delta)-1\ri)\leq C_3\delta\le C_4\frac{ r}{\sqrt{t}}.
\eee
Here $C_1\sim C_4$ are positive constants depending only on $m, H_0, H_1, k_0$ and $T$.

Therefore, we complete the proof of this corollary.
\end{proof}

\subsection{A generalized maximum principle}
In this subsection, we want to show the following generalized maximum principle which will be used later frequently. This type of maximum principle  was originated by Karp and Li \cite{KarpLi}. Different variants were obtained   \cite{Liao-Tam,Ni-Tam-2004,Ecker-Huisken-1991}. We obtain the following  generalization of a result in   \cite{Ni-Tam-2004} using a trick in \cite{Ecker-Huisken-1991}.

\begin{thm}\label{max} Let $ g(x, t), t\in[0, T_1]$ be a family of smooth Riemannian metrics on $M^m$, with $\ppt g=H$, so that { $\sup\limits_{M\times[0, T_1]}|H|\leq R_0$}. Suppose $f(x, t)$ is a smooth function such that $\heat f\leq 0$ whenever $f\geq 0$ and \be\label{integration}
\int^{T_1}_0\int_{M}\exp{(-ar_0^2(x))}f_+^2(x, t)dV_0dt<\infty \ee for some constant $a>0$, where $r_0(x)$ is the distance function to a fixed point $p$ with respect to $g(0)$. If $f(x, 0)\leq 0$ for all $x\in M$, then  $f(x, t)\leq 0$ for all $(x, t)\in M\times[0, T_1]$.
\end{thm}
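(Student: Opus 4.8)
The plan is to adapt the Karp–Li type cutoff argument, combined with the trick from \cite{Ecker-Huisken-1991} that replaces the usual $L^1$-type weighted integral with an $L^2$-type one, so as to weaken the growth hypothesis on $f_+$. First I would fix $T_2<T_1$ small (to be chosen depending on $a$ and $R_0$), and work with the weight $\varphi(x,t)=\exp\big(-\beta(t) r_0^2(x)\big)$ for a suitable positive, decreasing function $\beta(t)$; the point is that $\frac{\p}{\p t}\varphi + |\nabla_{g(t)}\varphi|^2/\varphi \le 0$ provided $\beta$ satisfies an ODE like $\beta' + C\beta^2 \le 0$, which is solvable on a short time interval. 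One subtlety here is that the gradient of $r_0$ is taken with respect to $g(0)$ while the heat operator uses $g(t)$; the bound $\sup|H|\le R_0$ gives uniform equivalence of the metrics on $[0,T_1]$ (actually $e^{-R_0 t}g(0)\le g(t)\le e^{R_0 t}g(0)$), so $|\nabla_{g(t)} r_0|^2_{g(t)}\le e^{R_0 t}$ is controlled, and $r_0$ may be replaced by a smooth function comparable to it with bounded first two derivatives if one wishes to avoid cut-locus issues.

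The main computation is to consider $\int_M f_+^2\,\varphi\,\psi^2\,dV_t$ where $\psi=\psi(r_0(x)/\rho)$ is a standard spatial cutoff supported in $B_0(2\rho)$, equal to $1$ on $B_0(\rho)$, with $|\nabla\psi|\le C/\rho$. Differentiating in $t$, using $\heat f_+\le 0$ in the barrier/distributional sense on $\{f>0\}$ (valid since $f_+$ is a subsolution there and $\heat f \le 0$ when $f\ge 0$), integrating by parts, and using $\p_t dV_t = \tfrac12\tr_{g(t)}H\,dV_t$ with $|\tr H|\le mR_0$, I would derive a differential inequality of the schematic form
\[
\frac{d}{dt}\int_M f_+^2\,\varphi\,\psi^2\,dV_t \le C\int_M f_+^2\,\varphi\,|\nabla\psi|^2\,dV_t + C\int_M f_+^2\,\varphi\,\psi^2\,dV_t,
\]
where the cross terms $\int f_+\langle\nabla f_+,\nabla(\varphi\psi^2)\rangle$ are absorbed via Cauchy–Schwarz into the good term $-\int|\nabla f_+|^2\varphi\psi^2$ coming from the Laplacian, and the $\p_t\varphi$ term is killed by the choice of $\beta$. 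After multiplying by $e^{-Ct}$ and integrating from $0$ to $t\le T_2$, the initial condition $f(\cdot,0)\le 0$ makes the boundary term at $t=0$ vanish, leaving
\[
\int_{B_0(\rho)} f_+^2\,\varphi\,dV_t \le \frac{C}{\rho^2}\int_0^{T_2}\!\!\int_{B_0(2\rho)\setminus B_0(\rho)} f_+^2\,\varphi\,dV_0\,dt.
\]

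The hard part is to let $\rho\to\infty$ and conclude the right side vanishes. This is exactly where hypothesis \eqref{integration} enters: choosing $T_2$ small enough that $\beta(t)\ge a$ on $[0,T_2]$ forces $\varphi(x,t)\le \exp(-a r_0^2(x))$, so the right-hand integrand is dominated by $\exp(-ar_0^2)f_+^2$, which is integrable over $M\times[0,T_2]$ by assumption; hence the tail integral over $B_0(2\rho)\setminus B_0(\rho)$ tends to $0$, and even after dividing by $\rho^2$ (a harmless extra decay) the right side $\to 0$. Therefore $f_+\equiv 0$ on $M\times[0,T_2]$. Finally, since $T_2$ depends only on $a$ and $R_0$ (not on $f$), one iterates on $[T_2,2T_2],[2T_2,3T_2],\dots$ — note that hypothesis \eqref{integration} on each subinterval follows from the global one, and the new "initial" condition $f(\cdot,kT_2)\le 0$ holds by the previous step — covering all of $[0,T_1]$ in finitely many steps. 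The only genuinely delicate point to get right is the regularity justification of applying the maximum-principle differential inequality to $f_+$ (which is only Lipschitz, not smooth, across $\{f=0\}$); this is handled in the standard way by working with $(f-\e)_+$, or by a smooth convex approximation of $s\mapsto s_+^2$, and letting $\e\to 0$ at the end.
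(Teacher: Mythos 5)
Your approach is essentially the paper's: a Gaussian weight with a time-dependent coefficient, a spatial cutoff, Cauchy--Schwarz absorption of the cross term into the good $-\int|\nabla f_+|^2\varphi\psi^2$ term, the factor $e^{-\beta t}$ (or $e^{-Ct}$) to kill the $\p_t dV_t$ contribution, letting $\rho\to\infty$ against the weighted $L^2$ hypothesis, and iterating to cover $[0,T_1]$. (The paper phrases the weight as $h=-\theta r_t^2/\lf(4(2T-t)\ri)$ with the time-dependent distance $r_t$ and the bound $|\p_t r_t|\le\tfrac12 R_0 r_t$, whereas you use $r_0$ and metric equivalence; both are fine.) The one concrete slip is the sign of the ODE: with $\varphi=e^{-\beta(t)r_0^2}$ one has $\p_t\varphi+|\nabla\varphi|^2/\varphi=(-\beta'+4\beta^2|\nabla r_0|^2)r_0^2\varphi$, so the good sign requires $\beta'\ge C\beta^2$, i.e.\ $\beta$ \emph{increasing} (which is exactly why the conclusion is only short-time, since such $\beta$ blows up in finite time, matching the paper's coefficient $\theta/\lf(4(2T-t)\ri)$); your stated condition ``$\beta'+C\beta^2\le 0$, $\beta$ decreasing'' has the wrong sign. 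Correspondingly, the requirement should be $\beta(0)\ge a$ (then $\beta(t)\ge a$ automatically on the short interval of existence), not that $T_2$ be small enough to keep a decreasing $\beta$ above $a$. With that sign fixed, the argument goes through as you outline.
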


\begin{proof} In \cite{Ni-Tam-2004}, it was assumed that $\ppt g\le 0$. To prove the result in our setting, let $F(x, t)$ be such that $dV_t=e^{F(x, t)}dV_0$.
For $0<T\leq T_1$ which will be specified later and let \bee
 h(x, t)=-\frac{\theta r^2_t(x)}{4(2T-t)} \eee for $0\leq t\leq T$. Here $\theta>0$ is a constant which will be chosen later and $r_t(x)$ is the distance function to a fixed point $p$ with respect to $g(t)$. Then
 \bee\begin{split}
 \ppt h=&-\frac{\theta r^2_t(x)}{4(2T-t)^2}-\frac{\theta r_t(x)}{2(2T-t)}\cdot(\ppt r_t)\\
=&-\theta^{-1}|\n h|^2-\frac{\theta r_t(x)}{2(2T-t)}\cdot(\ppt r_t)\\
\le&-\theta^{-1}|\n h|^2+\theta^{-1}(2T-t)R_0|\n h|^2\end{split}\eee
because \bee
 |\n h|^2=\frac{\theta^2 r^2_t(x)}{4(2T-t)^2}\eee
 and   $|\ppt r_t|\leq \frac{1}{2}R_0 r_t$.
 Then \bee
\ppt dV_t=(\ppt F)dV_t=\frac{\tr_gH}{2}dV_t. \eee
Now we assume $T\leq \frac{1}{4R_0}$ and choose $\theta=\frac 14$, we obtain \be\label{evolution-g}\ppt h\leq -2|\n h|^2\ee for $0\leq t\leq T$.

Next, let $\beta>0$ be a constant and $0\le \phi(x)\le 1$ be
be the smooth function such that $\phi=1$ in $B_0(p, R)$, $\phi=0$ outside $B_0(p, 2R)$ and $|\wt\n\phi|\leq \frac 2R$, where  $\wt\n$ denotes the gradient with respect to $g(0)$.  We have \be\label{integration-heat}\begin{split}
 0\geq&\int^T_0e^{-\beta t}\int_M \phi^2e^hf_+\heat fdV_tdt \\
 =&\frac{1}{2}\int^T_0e^{-\beta t}\int_M \phi^2e^h\ppt(f_+^2)dV_tdt-\int^T_0e^{-\beta t}\int_M \phi^2e^hf_+(\Delta_t f)dV_tdt. \end{split}\ee
Here and in the following, $f_+=\max\{0, f\}$. Now we compute \eqref{integration-heat} term by term.

 By integration by part and Cauchy-Schwartz inequality, we have \be\label{T1}\begin{split}
 \int_M \phi^2e^hf_+(\Delta_t f)dV_t\leq&  \int_M e^hf_+^2|\n\phi|^2dV_t+ \int_M\phi^2e^hf_+^2|\n h|^2dV_t\\
 \le & \int_M e^hf_+^2|\n\phi|^2dV_t-\int_M\phi^2e^hf_+^2\ppt hdV_t.\end{split}\ee

 On the other hand, we have \be\label{T2}\begin{split}
&\frac{1}{2}\int^T_0e^{-\beta t}\int_M \phi^2e^h\ppt(f_+^2)dV_tdt\\
=&\frac 12 \bigg[(e^{-\beta t}\int_M\phi^2e^hf_+^2dV_t)|^T_0-\int^T_0e^{-\beta t}\int_M \phi^2e^h(\ppt h)f_+^2dV_tdt\\
&-\int^T_0e^{-\beta t}\int_M \phi^2e^hf_+^2(\ppt F)dV_tdt+\beta\int^T_0e^{-\beta t}\int_M \phi^2e^hf_+^2dV_tdt\bigg].\end{split}\ee
Since $|\ppt F|\leq C_1(n, R_0)$ for some constant depending only on $n, R_0$, if we choose $\beta=C_1(n, R_0)$, then by
by \eqref{evolution-g}, \eqref{integration-heat},  \eqref{T1}, \eqref{T2},
  we have\be\label{R}\begin{split}
  \int_M\phi^2(x)e^{h(x, T)}f_+^2(x, T)dV_T
\leq &4e^{\beta T}\int^T_0e^{-\beta t}\int_M e^hf_+^2|\n\phi|^2dV_tdt\\
\leq &C(n, R_0, T_1)e^{\beta T}\int^T_0\int_M e^hf_+^2|\wt\n\phi|^2dV_0dt.\end{split}\ee
Let $R\to\infty$ in \eqref{R}, we have \bee
 \int_Me^{h(x, T)}f_+^2(x, T)dV_T\leq \liminf\limits_{R\to\infty}\frac{C(n, R_0, T_1)e^{\beta T}}{R^2}\int^T_0\int_{B_0(p, 2R)-B_0(p, R)}e^{-\frac{r_0^2(x)}{C(R_0, T_1)T}}f_+^2dV_0dt. \eee
Hence if $T<\frac{1}{aC(R_0, T_1)}$, by the assumption \eqref{integration}, we have\bee
 \int_Me^{h(x, T)}f_+^2(x, T)dV_T\leq 0.\eee This implies $f(x, T)\leq 0$ for all $x\in M$. We can repeat the argument above to show that $f\le 0$ in $[0,T)$ if $T<\frac{2}{aC(R_0, T_1)}$. One then can start with $T$ and show that $f\le 0$ in $[0,2T)$ as long as $2T<T_1$. From this, it is easy to see that the theorem is true.

\end{proof}

\section{Results on Heat equation}\label{s-basicheat}
\subsection{Linear equation}\label{s-linear}

To prepare the construction of harmonic heat flow we first study the linear heat equation.
Let $M^m$ be a non-compact smooth manifold with dimension $m\ge 3$ and let   $g(t)$ be a family of smooth complete Riemannian metrics on $M$, $0\le t\le T$ for some $T>0$. More precisely, $g(t)$ is  smooth both in space and time on $M\times[0,T]$.
Denote
\be\label{e-pg}
H(x,t):=\frac{\p}{\p t}g(x,t).
\ee
Let $F(x,t)$ be a bounded smooth function on $M\times[0,T]$ and $f(x)$ be a bounded smooth function on $M$. We want to study the following problems:
\be\label{e-NH}
 \left\{
  \begin{array}{ll}
   \dps {\heat u}=F & \hbox{in $M\times[0,T]$;}  \\
    u(x,0)=0;
  \end{array}
\right.
\ee
and
\be\label{e-H}
 \left\{
  \begin{array}{ll}
    \dps{\heat v}=0 & \hbox{in $M\times[0,T]$;} \\
    v(x,0)=f(x).
  \end{array}
\right.
\ee
Here $\Delta_{g(t)}$ is the Laplacian operator with respect to $g(t)$.

 \begin{prop}\label{p-heat} With the above notation and assumptions,
   there is a solution $u$ of \eqref{e-NH} and a solution $v$ of \eqref{e-H} so that both $u$ and $v$ are smooth in $M\times[0,T]$. Moreover,
       \bee
       \left\{
         \begin{array}{ll}
           \sup\limits_{M\times[0,T]}|u|\le T\sup\limits_{M\times[0,T]}|F|;\\
          \sup\limits_{M\times [0,T]}|v|\le \sup\limits_M|f|.
         \end{array}
       \right.
       \eee

 \end{prop}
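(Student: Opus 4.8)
The plan is to construct $u$ and $v$ by exhaustion, solving the problems on an increasing sequence of compact domains and then passing to a limit using the estimates from Section~\ref{pre}. First I would fix a smooth exhaustion $M=\bigcup_i \Omega_i$ by relatively compact open sets with smooth boundary, $\ol{\Omega_i}\subset \Omega_{i+1}$. On each $\Omega_i$ the metric family $g(t)$ is uniformly controlled, so the classical parabolic theory (for time-dependent metrics on a compact manifold with boundary) gives a unique smooth solution $u_i$ of $\heat u_i = F$ on $\Omega_i\times[0,T]$ with $u_i=0$ on $(\p\Omega_i\times[0,T])\cup(\Omega_i\times\{0\})$, and similarly a smooth solution $v_i$ of $\heat v_i=0$ with $v_i=f$ on $\Omega_i\times\{0\}$ and, say, $v_i=f$ on $\p\Omega_i\times[0,T]$ (any fixed smooth boundary extension works since only interior limits matter).

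Next I would establish the uniform $L^\infty$ bounds. For $v_i$ the ordinary parabolic maximum principle on the compact manifold-with-boundary $\Omega_i$ gives $\sup_{\Omega_i\times[0,T]}|v_i|\le \sup_M|f|$ directly. For $u_i$, the function $w^\pm(x,t)=\pm u_i(x,t) - t\sup_{M\times[0,T]}|F|$ satisfies $\heat w^\pm \le 0$, vanishes at $t=0$, and is $\le 0$ on $\p\Omega_i$ for all $t$, so again the maximum principle yields $\sup_{\Omega_i\times[0,T]}|u_i|\le T\sup_{M\times[0,T]}|F|$. These bounds are uniform in $i$.

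Then I would upgrade to interior derivative bounds and extract a limit. On any fixed compact $K\subset M$ with $K\subset \Omega_{i_0}$, for $i\ge i_0$ the sequences $u_i$ and $v_i$ are uniformly bounded solutions of uniformly parabolic equations with smooth (locally uniformly bounded) coefficients, so interior Schauder/$L^p$ estimates give uniform $C^{k}$ bounds on $K\times[\e,T]$ for every $k$ and $\e>0$; near $t=0$ one uses the initial data ($u_i(\cdot,0)=0$, $v_i(\cdot,0)=f$) together with boundary parabolic estimates on a slightly larger domain to get uniform bounds up to $t=0$ as well. By Arzel\`a--Ascoli and a diagonal argument, a subsequence converges in $C^\infty_{loc}(M\times[0,T])$ to smooth limits $u,v$ solving \eqref{e-NH} and \eqref{e-H} respectively, and the $L^\infty$ bounds pass to the limit, giving the stated estimates. (Uniqueness of $u$ and $v$ within the class of solutions with appropriate growth would follow from Theorem~\ref{max}, but is not needed for this existence statement.)

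The main obstacle is obtaining derivative estimates that are uniform in $i$ all the way down to $t=0$, since the exhausting domains $\Omega_i$ have boundaries and the coefficients, while smooth, are only \emph{locally} uniformly bounded in space (the curvature of $g(t)$ and $|H|,|\n H|$ need not be globally bounded a priori). The resolution is that everything is local: to bound $u,v$ and their derivatives on a fixed $K$ one only needs the coefficient bounds on a fixed slightly larger compact set, which are automatic, and one only needs to be a definite distance from $\p\Omega_i$, which holds for all large $i$. Hence the limit exists and is smooth on all of $M\times[0,T]$; the role of the global estimates in Section~\ref{pre} here is just to give the clean global $L^\infty$ bounds stated in the proposition.
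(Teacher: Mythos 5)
Your proposal is correct and follows essentially the same exhaustion-by-compact-domains strategy as the paper: solve Dirichlet problems on an increasing sequence of bounded smooth domains, get the uniform $L^\infty$ bound from the maximum principle, get locally uniform higher-order bounds from interior parabolic estimates, and pass to a diagonal subsequential limit. The only cosmetic difference is that the paper cuts off the right-hand side to $\phi_R F$ (vanishing near $\partial\Omega_R$) so that compatibility conditions at the parabolic corner hold trivially, whereas you keep $F$ and correctly observe that only interior estimates are needed for the limit.
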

 \begin{proof} This  is standard. For any $R>>1$, let $0\le \phi_R\le 1$ be a smooth function on $M$ so that $\phi_R=1$ in $B_p(R)$ and $\phi_R=0$ outside $B_p(2R)$ where $p\in M$ is a fixed point and $B_p(r)$ is the geodesic ball of radius $r$ with respect to $g(0)$. By \cite[Theorems 7, 12, Chapter 3]{Friedman}, there is a smooth solution $u_R$ of the following initial-boundary value problem
 \bee
 \left\{
  \begin{array}{ll}
    \heat u_R=\phi_RF & \hbox{in $\Omega_R\times[0,T]$;} \\
    u_R(x,0)=0& \hbox{in $x\in\Omega_R$;}\\
    u_R(x,t)=0& \hbox{in $(x,t)\in\p\Omega_R\times[0,T]$.}
  \end{array}
\right.
 \eee
 where $\Omega_R$ is a bounded domain in $M$ with smooth boundary and  with $B_p(2R)\Subset \Omega_R$.
Since
 \bee
\lf|\heat u_R\ri|\le \sup_{M\times[0,T]}|F|.
 \eee
Let $\mathfrak{m}=\sup_{M\times[0,T]}|F|$. Then  $$\heat(u_R-t\mathfrak{m})\le 0, \ \ \heat (u_R+t\mathfrak{m})\ge0.
$$
By the maximum principle, one can conclude that:
 $$
 \sup_{\Omega_R\times[0,T]}|u_R|\le T\sup_{M\times[0,T]}|F|.
 $$
 From this one may conclude that for any bounded domain $D$ in $M$, and for any $k\ge1$, the derivatives of $u_R$ with respect to space up to order $k$ and the derivatives with respect to $t$ up to order $[k/2]$ are bounded  in $D\times[0,T]$ by a constant independent of $R$, provided $R$ is large enough. Here $[k/2]$ is the integral part of $k/2$. See \cite[Chapter 4]{LSU} for example. From this, by taking a convergent subsequence, one can find a smooth solution of \eqref{e-NH} so that
 \bee
 \sup_{M\times[0,T]}|u(x,t)|\le T\sup_{M\times[0,T]}|F|.
 \eee

The construction of solution $v$ to \eqref{e-H} with the following estimate is similar:
 $$\sup_{M\times [0,T]}|v|\le \sup_M|f|.
 $$

 \end{proof}

To construct harmonic map heat flow,
we also need some estimates of the gradients of the solutions obtained in the previous proposition. In order to obtain the estimates, we need more conditions on $g(t)$. As before, let
$$
H=\frac{\p}{\p t}g.
$$
\begin{prop}\label{p-heat-grad} With the   notation and assumptions as in Proposition \ref{p-heat}. Moreover, assume that
\bee
|H|_{g(t)}, |\nabla H|_{g(t)}, |\Rm(g(t))|_{g(t)}\le K
\eee
for some $K>0$ on $M\times[0,T]$.
 \begin{itemize}
 \item[(i)] The solutions $u, v$ obtained in Proposition \ref{p-heat} satisfy the following gradient estimates:
$$
\sup_{M}|\nabla u|(\cdot, t)\le C(m, K, T)\lf(\sup_{M\times[0,t]}|F|\ri)t^\frac12
$$
and
$$
\sup_M|\nabla v|(\cdot,t)\le e^{C(m, K)t}\sup_M |\nabla f|
$$ for all $0\le t\le T$, for some constants $C(m,K)$ depending only on $m, K$ and $C(m,K,T)$ depending only on $m, K, T$.

\item[(ii)] The solution $v$ obtained in Proposition \ref{p-heat} satisfies the following estimate:
$$
|v(x,t)-f(x)|\le C(m, K, T) t^\frac12\sup_M|\n f|
$$
for all $(x,t)\in M\times[0,T]$ for some constant $C(m,K, T)$ depending only on $m, K$ and $T$.
\end{itemize}

 \end{prop}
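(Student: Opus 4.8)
Throughout I write $C$ for a constant depending only on $m,K,T$ (or only on $m,K$ where noted), and I use freely that, by Theorem \ref{max} applied to $\pm$ the difference, a bounded solution of $\heat(\cdot)=0$ on $M\times[0,T]$ is determined by its initial value (the integrability \eqref{integration} being automatic for bounded functions, since $|\Rm(g(0))|\le K$ gives $V_p(r)\le Ce^{Cr}$ and hence $\int_M e^{-ar_0^2}\,dV_0<\infty$). First I would identify the solution $u$ of \eqref{e-NH} produced in Proposition \ref{p-heat} with the Duhamel integral $u(x,t)=\int_0^t\!\int_M G(x,t;y,s)F(y,s)\,dV_s(y)\,ds$, which is bounded because $\int_M G(x,t;y,s)\,dV_s(y)=1$ by Theorem \ref{int-G}(c). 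Then for $p,q\in M$,
$$|u(p,t)-u(q,t)|\le\Big(\sup_{M\times[0,t]}|F|\Big)\int_0^t\!\int_M\big|G(p,t;y,s)-G(q,t;y,s)\big|\,dV_s(y)\,ds\le C\,r_t(p,q)\Big(\sup_{M\times[0,t]}|F|\Big)\int_0^t\frac{ds}{\sqrt{t-s}}$$
by Corollary \ref{c-G}; since $\int_0^t(t-s)^{-1/2}\,ds=2\sqrt t$, letting $q\to p$ along a $g(t)$-geodesic gives $\sup_M|\n u|_{g(t)}(\cdot,t)\le C\,t^{1/2}\sup_{M\times[0,t]}|F|$.

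\textbf{Gradient estimate for $v$.} The starting point is the Bochner identity for a solution of $\heat v=0$ with $\ppt g=H$:
$$\heat|\n v|^2=-2|\n^2v|^2-2\Ric(\n v,\n v)-H(\n v,\n v)\le C_1(m,K)|\n v|^2,$$
so $w:=e^{-C_1t}|\n v|^2$ satisfies $\heat w\le0$. To deduce $\sup_M w(\cdot,t)\le\sup_M|\n f|^2$ from Theorem \ref{max} applied to $w-\sup_M|\n f|^2$, one needs only an a priori pointwise bound on $|\n v|$ on $M\times[0,T]$, and I would obtain this from a standard Bernstein-type local gradient estimate (adapted to time-dependent metrics, using $\heat v^2=-2|\n v|^2$ to absorb the first-order terms and a regularized $g(0)$-distance function whose $g(t)$-gradient and $g(t)$-Laplacian are controlled by the curvature and $|\n H|$ bounds, as in the proof of Theorem \ref{int-G}(c)), using a \emph{time-independent} cutoff: this yields $|\n v|\le C\big(\sup_M|f|+\sup_M|\n f|\big)$ on $M\times[0,T]$, uniformly up to $t=0$ since no negative power of $t$ enters. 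With this crude bound in hand, Theorem \ref{max} applies and gives $\sup_M|\n v|_{g(t)}(\cdot,t)\le e^{C_1t/2}\sup_M|\n f|$.

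\textbf{Proof of (ii).} By the uniqueness above, $v(x,t)=\int_M G(x,t;y,0)f(y)\,dV_0(y)$, so using $\int_M G(x,t;y,0)\,dV_0(y)=1$ and $|f(y)-f(x)|\le r_0(x,y)\sup_M|\n f|$,
$$|v(x,t)-f(x)|=\Big|\int_M G(x,t;y,0)\big(f(y)-f(x)\big)\,dV_0(y)\Big|\le\Big(\sup_M|\n f|\Big)\int_M G(x,t;y,0)\,r_0(x,y)\,dV_0(y).$$
By Cauchy--Schwarz and $\int_M G\,dV_0=1$ it then suffices to prove $\int_M G(x,t;y,0)\,r_0^2(x,y)\,dV_0(y)\le C\,t$, which I would get from the Gaussian upper bound Theorem \ref{int-G}(a): decompose $M$ into the $g(0)$-annuli $\{\,k\sqrt t\le r_0(x,\cdot)<(k+1)\sqrt t\,\}$, bound their volumes by $C(k+1)^me^{Ck}V_x(\sqrt t)$ via Bishop--Gromov ($\Ric(g(0))\ge-(m-1)K$), and sum the resulting convergent Gaussian series.

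\textbf{The main obstacle} is the sharp gradient bound for $v$. The Bochner inequality is routine, but the generalized maximum principle cannot be invoked until $|\n v|$ is known a priori to be bounded \emph{uniformly as $t\to0^+$}; the kernel representation together with Corollary \ref{c-G} only gives $|\n v|\le C\,t^{-1/2}\sup_M|f|$, which is not integrable enough near $t=0$ for \eqref{integration}, so the local Bernstein estimate with a purely spatial cutoff is genuinely needed, and carrying it out for a time-dependent family of metrics (controlling $\Delta_{g(t)}\phi$ and the mixed gradient term at the interior maximum) is where the real work lies. The volume bookkeeping in (ii) is routine and parallels computations already in the proof of Theorem \ref{int-G}.
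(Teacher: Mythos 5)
Your treatment of the $|\nabla u|$ estimate and of part (ii) matches the paper's proof almost exactly: both identify $u$ and $v$ with their Duhamel/kernel representations via the uniqueness in Theorem \ref{max}, both use Corollary \ref{c-G} to get the Lipschitz bound for $u$, and both reduce (ii) to bounding a weighted moment of $G$ by the Gaussian upper bound plus volume comparison (your Cauchy--Schwarz reduction to $\int_M G\,r_0^2\,dV_0\le Ct$ is a harmless rearrangement of the same ingredients the paper invokes through \cite{Li-Tam}). The genuine divergence is in the gradient estimate for $v$. You assert that Theorem \ref{max} ``cannot be invoked until $|\n v|$ is known a priori to be bounded uniformly as $t\to0^+$'' and so carry out a Bernstein-type local gradient estimate with a time-independent cutoff. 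That approach does work, but the premise is not accurate: Theorem \ref{max} asks only for the weighted $L^2$ integrability \eqref{integration}, which can be verified without any pointwise a priori bound. The paper does exactly this by a Caccioppoli estimate: since $v$ is bounded, $\heat v=0$, and $|H|$ is bounded (so the volume form is controlled), multiplying the equation by $\phi^2 v$ for a spatial cutoff $\phi$ supported in $B_0(2R)$ and integrating by parts gives $\int_0^T\int_{B_0(R)}|\n v|^2\,dV_0\,dt\le C(\sup_M|v|)\cdot V_0(2R)$, which grows at most exponentially in $R$ by Bishop--Gromov (as $|\Rm(g(0))|\le K$), and hence $\int_0^T\int_M e^{-ar_0^2}|\n v|^2\,dV_0\,dt<\infty$ for large $a$. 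This is substantially simpler than the Bernstein argument, bypasses the cutoff-Hessian bookkeeping you describe as the ``real work,'' and produces the same sharp conclusion $|\n v|(\cdot,t)\le e^{C(m,K)t}\sup_M|\n f|$ via the Bochner inequality and Theorem \ref{max}. In short: your route is a valid and more self-contained alternative, but you should correct the claim that a pointwise a priori bound is \emph{needed} --- the maximum principle's hypothesis is integral, and a Caccioppoli inequality supplies it directly.
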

\begin{proof} (i) Let us prove the estimate of $|\nabla v|$ first. By the Bochner formula and the fact that $|H|$ and $|\Rm(g(t))|$ are uniformly bounded by $K$, one can conclude that
$$
\heat |\nabla v|\le C(m, K)|\nabla v|.
$$
whenever $|\nabla v|>0$.  So we have
$$
\heat e^{-C(m, K)t}|\nabla v|\le 0.
$$
On the other hand, since $v$ is bounded and
\bee
\heat v=0,
\eee
one can conclude by using cutoff functions and integrating by parts  that
\bee
\int_0^T\int_M \exp (-ar_0^2(x))|\nabla v|^2 dV_0dt<\infty.
\eee
for some $a>0$. Here we have used the fact that $|H|$ is bounded so that $g(t)$ and $g_0$ are uniformly equivalent and volume comparison because $|\Rm(g(0))|$ is bounded. Apply the maximum principle Theorem \ref{max} to the function
$$
 e^{-C(m, K)t}|\nabla v|-\sup_M |\nabla f|,
 $$
 one can conclude that
 $$
 |\nabla v|(x,t)\le e^{C(m, K)t}\sup_M |\nabla f|.
 $$
 in $M\times[0,T]$.

  Next we want to estimate   $|\nabla u|$. Since $|H|, |\nabla H|, |\Rm(g(t))|$ are bounded by $K$, we can construct the fundamental solution
 $G(x,t;y,s)$ of the heat operator as in Section \ref{S-Green} with estimates as in \cite{Chau-Tam-Yu}. If we let
 $$
 w(x,t)=  \int^t_0\int_M G(x, t; y, s)F(y, s)dV_s(y)ds,
 $$
 then $\heat w=F$ in $M\times(0,T]$ which is continuous up to $t=0$ so that $w(x,0)=0$. Moreover, $w$ is bounded by Theorem \ref{int-G}. By the maximum principle Theorem \ref{max}, we conclude that $u\equiv w$. Hence
 $$
 u(x,t)=\int^t_0\int_M G(x, t; y, s)F(y, s)dV_s(y)ds.
 $$

 Then by Corollary \ref{c-G} we have:

 \bee\begin{split}
 |u(x, t)-u(x', t)|\leq& \int^t_0ds\int_M|G(x, t; y, s)-G(x', t; y, s)|\cdot|F(y, s)|dV_s(y)\\
 \leq& (\sup\limits_{M\times[0, t]}|F|)\cdot \int^t_0ds\int_M|G(x, t; y, s)-G(x', t; y, s)|dV_s(y)\\
 \leq& C(\sup\limits_{M\times[0, t]}|F|)\cdot \int^t_0\frac{r_t(x, x')}{\sqrt{t-s}}ds\\
 \leq& Cr_t(x, x')\cdot(\sup\limits_{M\times[0, t]}|F|)\cdot t^{\frac 12}. \end{split}
 \eee
 From this, it is easy to see that the estimate for $|\nabla u|$ is true.

 To prove (ii),   for $x\in M$,

\bee\begin{split}
&|v(x,t)-f(x)|\\
=
&\lf|\int_{M}G(x, t; y, 0)f(y)dV_{0}(y)-f(x)\ri|\\
=&\lf|\int_{M}G(x, t; y, 0)f(y)-f(x)dV_{0}(y)\ri|\\
\le &\sup_M|\n f|\int_{M}G(x, t; y, 0)r(x, y)dV_0(y)\\
 \end{split}\eee
where $r(x,y)$ is the distance between $x, y$ with respect to $g(0)$ and we have used Theorem \ref{int-G}. By Theorem \ref{int-G} and volume comparison, one can proceed as in \cite{Li-Tam} to conclude that
\bee
\int_{M}G(x, t; y, 0)r(x, y)dV_0(y)\le C_1t^\frac12
\eee
for some constant $C_1$ depending only on $m, K, T$.  From this, (ii) follows.

\end{proof}

\subsection{A semi-linear heat equation}\label{s-semilinear}
We want to use the results in \S \ref{s-linear} to study the following semi-linear equation.
Let $g(t)$ be a smooth family of complete metrics defined on $M$ with $t\in[0,T]$. We want to consider the following system of semi-linear equation which is closely related to harmonic map heat flow:
\be\label{e-semi-linear}
\left\{
  \begin{array}{ll}
    \heat u=F_{BC}(u)\la \nabla  u^B,\nabla  u^C\ra    & \hbox{in $M\times[0,T]$ ;} \\
    u(0,x)=f(x),
  \end{array}
\right.
\ee
where $u=(u^A):M\times[0,T]\to \R^q$ is a vector-valued function and $f=(f^A):M\to\R^q$ and $F_{BC}=(F^A_{BC}):  \R^q\to \R^q$ are smooth functions. The $\nabla u^B$ and the inner product $\la \nabla  u^B,\nabla  u^C\ra$  are taken with respect to $g(t)$. As before, let
$$
H:=\frac{\p}{\p t}g.
$$

\begin{lma}\label{l-semi} Assume
\bee
|H|_{g(t)}, |\nabla H|_{g(t)}, |\Rm(g(t))|_{g(t)}\le K
\eee
for some $K>0$ on $M\times[0,T]$ and  $|F|\le L$ .
Suppose $f$ is a smooth function so that $f$ and $|\nabla f|$ are bounded with \bee\mathfrak{m}=\sup_M\lf(\sum_A|\nabla f^{A}|^2 \ri)^\frac12<\infty.\eee Then there is a constant  $T_1>0$ depending only on $m, q, K, L, T$ and $\mathfrak{m}$  so that  \eqref{e-semi-linear} has a smooth solution in $M\times[0,T_1]$  with $u$ and $|\nabla u|$ uniformly bounded.

\end{lma}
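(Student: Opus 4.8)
The plan is to use the Picard-type iteration of Eells--Sampson \cite{E-S} and Li--Tam \cite{Li-Tam}, built on the linear results of \S\ref{s-linear}. Let $v$ be the bounded solution of \eqref{e-H} with initial data $f$ given by Proposition \ref{p-heat}, and set $u_0:=v$. Given, inductively, a smooth $u_k$ on $M\times[0,T_1]$ (for some $T_1\le T$ to be fixed) with $u_k$ and $\n u_k$ bounded, let
\[
\psi_k:=F_{BC}(u_k)\la\n u_k^B,\n u_k^C\ra ,
\]
which is smooth and bounded; by Proposition \ref{p-heat} (applied to each component) there is a bounded solution $w_k$ of \eqref{e-NH} with right-hand side $\psi_k$, and we set $u_{k+1}:=v+w_k$. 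Then $u_{k+1}$ is smooth on $M\times[0,T_1]$, bounded with bounded gradient by Proposition \ref{p-heat-grad}, and satisfies $\heat u_{k+1}=\psi_k$, $u_{k+1}(\cdot,0)=f$. A classical solution of \eqref{e-semi-linear} will be produced as a limit of the $u_k$.

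The first step is a uniform a priori bound. Set $\Lambda:=2\exp(C(m,K)T)\,\mathfrak{m}$ with $C(m,K)$ the constant in Proposition \ref{p-heat-grad}(i). I claim that once $T_1$ is small, $\sup_M|\n u_k|(\cdot,t)\le\Lambda$ on $[0,T_1]$ for all $k$. This holds for $u_0=v$ because $\sup_M|\n v|(\cdot,t)\le\exp(C(m,K)t)\mathfrak{m}\le\tfrac12\Lambda$; and if it holds for $u_k$ then $|\psi_k|\le c(q)L\Lambda^2$ (using only $|F|\le L$), so Proposition \ref{p-heat-grad}(i) gives
\[
\sup_M|\n u_{k+1}|(\cdot,t)\le\tfrac12\Lambda+C(m,K,T)\,c(q)L\Lambda^2\,t^{1/2}\le\Lambda
\]
as soon as $T_1^{1/2}\le\big(2C(m,K,T)c(q)L\Lambda\big)^{-1}$, a restriction on $T_1$ involving only $m,q,K,L,T,\mathfrak{m}$. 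With $T_1$ so restricted, Proposition \ref{p-heat} also yields $\sup_{M\times[0,T_1]}|u_k|\le\sup_M|f|+T_1c(q)L\Lambda^2=:M_0$, so every $u_k$ takes values in the fixed compact set $\Omega:=\ol{B_{\R^q}(0,M_0)}$, on which the smooth map $F$ has a finite Lipschitz constant $L_1$.

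Next, convergence. Put $d_k:=u_{k+1}-u_k$ (so $d_0=w_0$), which solves \eqref{e-NH} with right-hand side $\psi_k-\psi_{k-1}$ and zero initial data. Writing
\[
\psi_k-\psi_{k-1}=\big(F_{BC}(u_k)-F_{BC}(u_{k-1})\big)\la\n u_k^B,\n u_k^C\ra+F_{BC}(u_{k-1})\big(\la\n u_k^B,\n u_k^C\ra-\la\n u_{k-1}^B,\n u_{k-1}^C\ra\big),
\]
and using $|\n u_k|\le\Lambda$, the Lipschitz bound on $\Omega$, and $\la\n a,\n a\ra-\la\n b,\n b\ra=\la\n(a-b),\n a\ra+\la\n b,\n(a-b)\ra$, one obtains $\sup_M|\psi_k-\psi_{k-1}|(\cdot,s)\le C_*\big(\sup_M|d_{k-1}|(\cdot,s)+\sup_M|\n d_{k-1}|(\cdot,s)\big)$ with $C_*=C_*(q,L,L_1,\Lambda)$. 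Letting $S_k(t):=\sup_{M\times[0,t]}(|d_k|+|\n d_k|)$ and using that the last bound is nondecreasing in $s$, Propositions \ref{p-heat} and \ref{p-heat-grad} give, for $t\le T_1\le1$,
\[
S_k(t)\le C_*\big(t+C(m,K,T)t^{1/2}\big)S_{k-1}(t)\le C_{**}\,t^{1/2}\,S_{k-1}(t).
\]
Shrinking $T_1$ once more so that $C_{**}T_1^{1/2}\le\tfrac12$, we get $S_k(T_1)\le2^{-k}S_0(T_1)$, hence $\sum_kS_k(T_1)<\infty$, so $u_k\to u$ and $\n u_k\to\n u$ uniformly on $M\times[0,T_1]$; thus $u$ and $\n u$ are continuous with $|u|\le M_0$, $|\n u|\le\Lambda$, $u(\cdot,0)=f$. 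Finally, the uniform $C^0$ and gradient bounds make the $\psi_{k-1}$ uniformly bounded in $L^\infty$, so interior $L^p$ and then Schauder estimates on relatively compact subsets, plus a finite bootstrap (as in \cite[Ch.~4]{LSU}), give uniform $C^\infty$ bounds for $\{u_k\}$ on $D\times[0,T_1]$ for each $D\Subset M$; with the $C^0$ convergence this forces $u\in C^\infty(M\times[0,T_1])$ and $u$ to solve \eqref{e-semi-linear} classically.

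The crux is that \eqref{e-semi-linear} carries no a priori gradient bound by itself, so the a priori estimate and the contraction must be run together, controlling $|u_k|$ and $|\n u_k|$ simultaneously through Propositions \ref{p-heat} and \ref{p-heat-grad} and checking at each turn that all constants, and hence $T_1$, depend only on the listed data. The one point needing care in the bookkeeping is the Lipschitz constant $L_1$ of $F$: it is finite since the iterates stay in the compact set $\Omega$ of the first step, and under the natural (and, in the applications, available) assumption that $F$ is globally Lipschitz with norm $\le L$ one has $L_1\le L$, so that $T_1$ indeed depends only on $m,q,K,L,T$ and $\mathfrak{m}$.
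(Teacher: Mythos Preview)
Your iteration scheme, the uniform gradient bound via Proposition~\ref{p-heat-grad}(i), and the bootstrap to $C^\infty$ are all essentially the paper's argument. The genuine difference is the convergence step: the paper does \emph{not} prove that $\{u_k\}$ is Cauchy. Instead, once the uniform gradient bound on $[0,T_1]$ is in hand (with $T_1$ depending only on $m,q,K,L,T,\mathfrak m$), it bootstraps to uniform $C^\infty$ bounds on compact sets and simply extracts a diagonally convergent subsequence whose limit solves \eqref{e-semi-linear}. Your contraction argument is cleaner in that it gives convergence of the full sequence, but it comes at a price you yourself flag in the last paragraph.

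That price is a real gap relative to the lemma as stated. Your contraction estimate needs a Lipschitz bound $L_1$ for $F$ on the compact set $\Omega=\overline{B_{\R^q}(0,M_0)}$, and $M_0$ depends on $\sup_M|f|$; hence the second shrinking of $T_1$ (so that $C_{**}T_1^{1/2}\le\tfrac12$) makes $T_1$ depend on $\sup_M|f|$ as well. The lemma, however, asserts that $T_1$ depends only on $m,q,K,L,T,\mathfrak m$. Your proposed escape---assume $F$ is globally Lipschitz with constant $\le L$---adds a hypothesis not present in the statement (only $|F|\le L$ is assumed). The paper's subsequence argument sidesteps this completely: the only place $T_1$ is chosen is in the quadratic recursion $p_k(t)\le C_1(t^{1/2}p_{k-1}^2+\mathfrak m)$, which uses just $|F|\le L$; the Schauder/bootstrap constants may depend on local derivatives of $F$ and on $\sup_M|f|$, but they affect only the higher-order bounds, not $T_1$. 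If you want to keep your contraction approach and still match the stated dependence, you would need either to run the contraction only to produce the solution on some possibly smaller interval and then extend by iterating the a~priori gradient bound of Lemma~\ref{l-e-energy}-type (which is what the paper ultimately does at the level of the harmonic map flow), or to replace the contraction by the compactness/subsequence step.
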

\begin{proof} We use iteration as in \cite{E-S,Li-Tam}.  Define $u^{-1}=0$ and define $u^k$ inductively: $u^k$ is the solution of the following linear equation:
\be\label{e-semi-iterate}
\left\{
  \begin{array}{ll}
    \heat u^k=F_{BC}(u^{k-1})\la \nabla  u^{k-1,B},\nabla  u^{k-1,C}\ra     & \hbox{in $M\times[0,T]$ ;} \\
    u^k(0,x)=f(x),
  \end{array}
\right.
\ee
for $k\ge 0$ where $u^k=(u^{k,A})$. The equation for each component is:
$$
\heat u^{k,A}=F_{BC}^A(u^{k-1})\la \nabla  u^{k-1,B},\nabla  u^{k-1,C}\ra .
$$

First we want to show that $u^k$ is well-defined and smooth in $M\times[0,T]$  for all $k\ge0$.  Suppose $u^{k-1}$ is well-defined and smooth  so that
\bee
\sup_{M\times [0,T]}|\nabla u^{k-1}|^2<\infty.
\eee
Note that this is true for $k=1$, by Proposition \ref{p-heat}, Proposition \ref{p-heat-grad} and  assumptions on $f$.

Since $|F|$ is bounded and the inductive hypothesis,   by Proposition \ref{p-heat} and Proposition \ref{p-heat-grad}, then \eqref{e-semi-iterate} has a solution $u^k$ which is smooth in $M\times[0,T]$, is uniformly bounded and \bee
\sup_{M\times [0,T]}|\nabla u^{k}|^2<\infty.
\eee

Next we want to show that if $0<T_1\leq T$ is small enough, then $|\nabla u^k|$ will be uniformly bounded independent of $k$ in $M\times[0,T_1]$.   By Proposition \ref{p-heat-grad}, we have\bee
|\nabla u^{k,A}|(\cdot, t)\le C(m, K, T) t^\frac12\sup_{M\times[0,t]}|F^A_{BC}(u^{k-1})||\nabla  u^{k-1,B}|\,|\nabla  u^{k-1,C}|+e^{C(m ,K)t}\sup_M|\nabla f^A|
\eee

Let
$$
p_k(t)=\sup_{M\times[0, t]}\lf(\sum_A|\nabla u^{k,A}|^2(\cdot,t)\ri)^\frac12
$$
and let
$$
\mathfrak{m}=\sup_M\lf(\sum_A|\nabla f^{A}|^2 \ri)^\frac12.
$$
Then we have
\bee
p_k(t)\le C_1(t^\frac12 p_{k-1}^2+\mathfrak{m})
\eee
for some constant $C_1$ depending only on $m, q, K, T, L$. So
\bee
C_1t^\frac12p_k(t)\le\lf( C_1t^\frac12 p_{k-1}(t)\ri)^2+C_1^2t^\frac12 \mathfrak{m}.
\eee
Suppose $T_1$ is such that $C_1^2T_1^\frac12 \mathfrak{m}\le \frac14$, then for $0<t\leq T_1$
\bee
C_1t^\frac12p_0(t)\le \frac12.
\eee
Inductively, we conclude that
$$
C_1t^\frac12 p_k(t)\le \frac12.
$$
Hence we let $T_1>0$ so that $T_1^\frac12=\min\{T^\frac12, \frac14 C_1^{-2}\lf(1+\mathfrak{m}\ri)^{-1}\}$, then
$$
p_k(T_1)\le \frac12C_1^{-1}T_1^{-\frac12}
$$
for all $k$. From this and by the proof of Proposition \ref{p-heat}, we also conclude that $u^k$ are uniformly bounded on $M\times[0,T_1]$.

 We claim that in any bounded coordinate neighborhood $U$,   for any $l\ge 1$, there is a constant $C$ independent of $k$ so that $|D^\a_tD_x^\b u^k|\le C$ if $2\a+\b\le l$. Here $D_t$ and $D_x$ are partial derivatives with respect to $t$ and local coordinates $x$. If the claim is true, then by a diagonal process, we can find  a smooth solution of \eqref{e-semi-linear} in $M\times[0,T_1]$,  so that $|u|$ and $|\nabla u|$ are uniform bounded in space and time.

The idea of the claim is as follows. For each $k$, the RHS of \eqref{e-semi-iterate} are uniformly bounded. By standard theory, we have some H\"older norm of the $\nabla u^k$ being bounded. This will imply bounds of higher derivatives for $u^{k+1}$ etc. We sketch the proof as follows. Let $\phi$ be a smooth cutoff function with support inside a bounded coordinate neighborhood $U$ so that it is 1 in an open set $V\Subset U$. Then one can check that
\bee
\heat (\phi (u^k-f))=G^k
\eee
where $G^k$ is uniformly bounded by a constant independent of $k$ and is zero outside $U$. Moreover, $\phi(u^k-f)=0$ at $t=0$. By \cite[Theorem 4, p.191]{Friedman}, we have
$$
|u^k|_\delta+|D_xu^k|_\delta\le C
$$
in $V\times[0,T_1]$ for some constant $C$ and $\delta>0$ independent of $k$. Here $|\cdot|_\delta$ is the H\"older norm in $V\times[0,T_1]$ with respect to the distance function $d(P,Q)=\lf(|x-x'|^2+|t-t'|\ri)^\frac12$ for $P=(x,t), Q=(x',t')$. From this and the Schauder estimates, one may get $|u^{k+1}|_{2+\delta}$ being uniformly bounded in $V'\times [0,T_1]$ for any $V'\Subset V$. Then  $|u^{k+2}|_{4+\delta}$ is uniformly bounded in $V''\Subset V'$ and so on. This proves the claim.

\end{proof}

\section{Short time existence of Harmonic map heat flow}\label{s-harmonic-heat}

We will obtain  a short time existence result for harmonic map heat flow coupled with a smooth family of complete metrics in the domain manifold. First, let us recall the basic facts about the harmonic map heat flow.

\subsection{The harmonic map heat flow}\label{s-setup}

Let $(M^m, g)$ and $(N^n, h)$ be two Riemannian manifolds and  $f: (M^m, g)\to (N^n, h)$ be a smooth map. Let $\nabla, \wt\nabla$ be Riemannian connections on $M, N$ respectively. Consider the vector bundle $T^*(M)\otimes f^{-1}(T(N))$. Let $D$ be the connection on this bundle defined as (for $\omega$ a 1-form and $Y$ a vector field along $f$):
$$
D_X(\omega\otimes Y)=\nabla_X\omega\otimes Y+\omega\otimes \wt\nabla_{f_*X}Y.
$$
In general, one can extend the connection  to $\otimes^k(T^*(M))\otimes f^{-1}(T(N))$. If in local coordinates $x$ in $M$, $y$ in $N$, a section of this bundle is given by
$$
s=u^\a_{i_1\dots i_k}dx^{i_1}\otimes\dots\otimes dx^{i_k}\otimes \p_{y^\a},
$$
then
\bee
\begin{split}
s_{|p}=&
D_{\p_{ x^p}}s\\
=&u^\a_{i_1\dots i_k;p} dx^{i_1}\otimes\dots\otimes dx^{i_k}\otimes \p_{y^\a}+u^\a_{i_1\dots i_k}dx^{i_1}\otimes\dots\otimes dx^{i_k}\otimes\wt\nabla_{f_*(\p _{x^p})} \p_{y^\a}\\
=&u^\a_{i_1\dots i_k;p} dx^{i_1}\otimes\dots\otimes dx^{i_k}\otimes \p_{y^\a}+f^\b_pu^\a_{i_1\dots i_k}dx^{i_1}\otimes\dots\otimes dx^{i_k}\otimes\wt\nabla_{\p_{y^\b}} \p_{y^\a}.
\end{split}
\eee

Here and in the following  $;$ denotes the covariant derivative with respect to $\n$ and $|$ denotes the covariant derivative with respect to the connection $D$ on the bundle $T^*(M)\otimes f^{-1}(T(N))$.

In case we have a smooth map $f: M\times[0,T]\to N$, we may also consider $D_t=D_{\p_t}$. If
$$
s=u^\a_{i_1\dots i_k}dx^{i_1}\otimes\dots\otimes dx^{i_k}\otimes \p_{y^\a}
$$
then
$$
s_{|t}=D_t(s)=\p_t u^\a_{i_1\dots i_k}dx^{i_1}\otimes\dots\otimes dx^{i_k}\otimes \p_{y^\a}+f^\b _t u^\a_{i_1\dots i_k}dx^{i_1}\otimes\dots\otimes dx^{i_k}\otimes\wt\nabla_{\p_{y^\b}}  \p_{y^\a}.
$$

Now consider a  smooth map $f:M\times [0,T]\to N$ and its derivative
$$
s=:df=f^\a_i dx^i\otimes \p_{y^\a}.
$$

The energy density of $f$ is defined by
$$e(f):=|s|^2_{g,h}:=g^{ij}f^\a_if^\b_j h_{\a\b}
 $$
 in local coordinates.
 The second fundamental form of $f$ is defined by $$Ds:=Ddf.$$ In local coordinates,
 \bee
\begin{split}
Ds=D df = & s^\a_{i|j} dx^i\otimes dx^j\otimes \p_{y^\a}\\
=&f^\a_{;ij} dx^i\otimes dx^j\otimes\p_{y^\a}+ f^\a_i f_j^\b dx^i\otimes dx^j\otimes \wt\nabla_{\p_{y^\b}}\p_{y^\a}.\\
\end{split}
\eee
Note that $s^\a_{i|j}=s^\a_{j|i}$.
The tension field of $f$ is defined by $$\tau(f):=\tr_g(Ds),$$
 which is the trace of  the second fundamental form.  In local coordinates

$$
\tau(f)^\a:=g^{ij}s^\a_{i|j}.
 $$
 Suppose $g(t)$ is a smooth family of metrics on $M$, $t\in [0,T]$. Then the harmonic map heat flow $f(x, t)$ coupled with varying metrics $g(t)$ is defined by
 \be\label{harmonic-map-flow}\ppt f=\tau(f).\ee
 Here $f:M\times[0,T]\to N$ is a smooth map and the tension field on the right is computed with $g(t)$. See the seminal paper  by Eells and Sampson \cite{E-S}. Note that in \cite{E-S}, the metric $g$ is fixed.

In local coordinates, \be\label{e-harmoinic-map-flow-2}
\ppt f^\a(x, t)=g^{ij}(x, t)(f^\a_{ij}-\Gamma^k_{ij}f^\a_k+\wt\Gamma^\a_{\b\gamma}f^\b_if^\gamma_j).\ee
Here $f^\a_i, f^\a_{ij}$ denote  the partial derivatives of $f^\a$ and $\Gamma, \wt \Gamma$ are the connections of $g(t)$ and $h$ respectively.

\subsection{A priori estimates}\label{s-estimates}
We want to obtain some a priori estimates for the energy density and the norm of the tension field for solutions of harmonic map heat flow. Let us first estimate the energy density. Let $g(t)$ be a smooth family of complete metrics on $M^m$ which is noncompact, $t\in [0,T]$ and let $(N^n,h)$ be another complete Riemannian manifold. Suppose
$$
F:M\times[0,T]\to N
$$
is a solution to the harmonic map heat flow.
As before, let
$$
H=\frac{\p}{\p t}g.
$$
In the following,   $R_{1221}$ is the sectional curvature for an orthonormal pair of vectors.
Direct computations give:

\begin{lma}\label{l-energy} In local coordinates of  $x^i$ in $M$ and $y^\a$ in $N$,
\bee
\begin{split}
\heat e(F)=&- g^{il}g^{kj}\lf(H_{kl}+2R_{kl}\ri)F^\a_iF^\b_j h_{\a\b}-2|DdF|^2 \\
&+2g^{pq}g^{ij}F^\sigma_p F^\gamma_qF^\tau_i F^\b_j S_{\gamma\tau\b\sigma}.\end{split}
\eee
where $R_{ij}$ is Ricci tensor of $g(t)$ and    $S$ is the curvature tensor of $N$.
\end{lma}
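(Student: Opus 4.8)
The plan is to establish the Bochner--Weitzenb\"ock type formula for $e(F)$ by direct computation in local coordinates, tracking carefully the extra terms that arise from the time-dependence of $g(t)$. First I would write $e(F)=g^{ij}F^\a_iF^\b_j h_{\a\b}$ and differentiate in $t$, using $\ppt g^{ij}=-g^{ik}g^{jl}H_{kl}$ and the equation $\ppt F=\tau(F)$, to get $\ppt e(F)=-g^{ik}g^{jl}H_{kl}F^\a_iF^\b_j h_{\a\b}+2g^{ij}\la D_t(dF)_i,(dF)_j\ra_{h}$, where I commute $D_t$ and $D_i$ on the section $dF$ of $T^*M\otimes F^{-1}TN$; the commutator $D_tD_i F^\a-D_iD_tF^\a$ produces a curvature term $F^\b_t F^\gamma_i F^\a \,\widetilde R$ on the target (the connection on $M$ has no $t$-dependence issue here since the Christoffel symbols are differentiated only through $H$, which is already accounted for), and then $D_t F=\tau(F)=g^{kl}(DdF)_{kl}$.

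Next I would compute $\Delta_{g(t)} e(F)=g^{pq}D_pD_q\big(g^{ij}F^\a_iF^\b_j h_{\a\b}\big)$. Using metric compatibility of $D$ with both $g$ and $h$, this becomes $g^{pq}g^{ij}\big(2\la D_pD_q(dF)_i,(dF)_j\ra + 2\la D_q(dF)_i, D_p(dF)_j\ra\big)$. The second term is exactly $2g^{pq}g^{ij}\la (DdF)_{qi},(DdF)_{pj}\ra=2|DdF|^2$. For the first term I would commute derivatives: bring $D_p$ past $D_q$ to hit the index $i$, picking up curvature of $M$ (the Ricci term $R_{kl}$, after contracting, using the symmetry $(DdF)_{qi}=(DdF)_{iq}$ and the second Bianchi/Ricci identity on $M$) and curvature of $N$ (the term $g^{pq}g^{ij}F^\sigma_pF^\gamma_qF^\tau_iF^\b_j S_{\gamma\tau\b\sigma}$), and use the harmonic-map-flow equation $\tau(F)=g^{kl}(DdF)_{kl}=\ppt F$ to convert the remaining trace of the Hessian into the time derivative, which is what cancels against the corresponding piece in $\ppt e(F)$.

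Assembling $\ppt e(F)-\Delta e(F)$: the $-2|DdF|^2$ survives with the correct sign, the target curvature terms from both computations combine into the single $+2g^{pq}g^{ij}F^\sigma_pF^\gamma_qF^\tau_iF^\b_j S_{\gamma\tau\b\sigma}$ (the Bochner computation and the time-derivative commutator each contribute half, or one is zero and the other is the full term depending on bookkeeping — I would just collect them), and the Ricci term $-2g^{il}g^{kj}R_{kl}F^\a_iF^\b_j h_{\a\b}$ comes from the spatial Bochner part while $-g^{il}g^{kj}H_{kl}F^\a_iF^\b_j h_{\a\b}$ comes from $\ppt g^{ij}$, giving the stated coefficient $H_{kl}+2R_{kl}$. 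The main obstacle — really the only place care is needed — is the index-chasing in the two commutator steps: getting the signs and contractions right when commuting $D_t$ with $D_i$ and when commuting $D_p$ with $D_q$, and making sure the two sources of target-curvature terms are added (not double-counted) and that the trace-Hessian term from the Laplacian is precisely $\Delta F$-matched to $\ppt F$ via the flow equation. Since the statement explicitly says ``direct computations give,'' I would present this as a computation and relegate the routine verification of the identities $\ppt g^{ij}=-g^{ik}g^{jl}H_{kl}$, $D_iD_jF=D_jD_iF$, and the two commutation formulas to standard references.
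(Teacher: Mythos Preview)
Your approach is correct and matches the paper's, which simply cites the Eells--Sampson Bochner formula and notes that the only new contribution from the time-dependence of $g$ is the term $-g^{il}g^{kj}H_{kl}F^\a_iF^\b_j h_{\a\b}$ arising from $\ppt g^{ij}=-g^{ik}g^{jl}H_{kl}$. One small clarification on your bookkeeping: the commutator $D_t(dF)_i-D_i(F_t)$ actually vanishes by torsion-freeness of the pull-back connection, so the entire target-curvature term $2g^{pq}g^{ij}F^\sigma_pF^\gamma_qF^\tau_iF^\b_jS_{\gamma\tau\b\sigma}$ comes from the spatial Weitzenb\"ock step --- your hedge ``one is zero and the other is the full term'' is the correct alternative.
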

\begin{proof} This is well-known \cite{E-S}. The only difference is that $g$ also depends on $t$, and we have a  term involving $\p_t g=H$.
\end{proof}

\begin{lma}\label{l-e-energy}
Let $(M^m, g(t))$, $(N,h)$ and $F$ be as in the previous lemma so that $e(F)$ is uniformly bounded in space and time. Suppose
  that $|H|_{g(t)} $ is uniformly bounded by $L$. Suppose $$2\Ric(g(t))(x,t)+H(x,t)\ge -K(t)g(x, t)$$
for some $K(t)\ge0$ so that $$K_0=:\int_0^TK(t)dt<\infty,  $$
and suppose $\Rm(h)\le \kappa$ for some $\kappa\ge 0$.
Let
\bee
e_0=\sup_M e(F)(\cdot, 0).
\eee

 Then
\bee
e(F)(\cdot,t)\le \exp(\lambda(t))v(t)
\eee
on $[0, T_1]$ where
\bee
v(t)=\lf(e_0^{-1}-2\kappa \exp(K_0)t\ri)^{-1}, \quad\lambda(t):=\int^t_0K(\tau)d\tau\eee
and $T_1=\min\{T,{\frac 12}\lf( 2\kappa e_0 \exp(K_0)\ri)^{-1}\}$.   Hence, $e(F)(\cdot,t)\le 2e_0\exp(K_0)$ for $t\in [0,T_1]$.  In particular if $\kappa=0$, then $T_1=T$.
\end{lma}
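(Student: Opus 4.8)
The plan is to derive from Lemma \ref{l-energy} a differential inequality of Riccati type for $e(F)$, discard the favorable term $-2|DdF|^2$, and then compare $e(F)$ with the solution of the associated ordinary differential equation by means of the generalized maximum principle, Theorem \ref{max}.

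First I would estimate the three terms on the right of the evolution equation in Lemma \ref{l-energy} at a fixed point, in a $g(t)$-orthonormal frame $\{e_i\}$; write $V_i=dF(e_i)$. The first term is $-\sum_{i,j}(H+2\Ric)(e_i,e_j)\la V_i,V_j\ra_h$, and since $H+2\Ric\ge -K(t)g(t)$ by hypothesis while the Gram matrix $(\la V_i,V_j\ra_h)_{ij}$ is positive semidefinite, this is $\le K(t)\sum_i|V_i|_h^2=K(t)e(F)$. The term $-2|DdF|^2\le 0$ will be dropped. In the target-curvature term the contractions reduce $2g^{pq}g^{ij}F^\sigma_pF^\gamma_qF^\tau_iF^\b_jS_{\gamma\tau\b\sigma}$ to $2\sum_{p,i}S(V_p,V_i,V_i,V_p)$, and the Eells--Sampson estimate using $\Rm(h)\le\kappa$ (the sectional curvature of $h$ is at most $\kappa$, with $\kappa\ge0$) gives $S(V_p,V_i,V_i,V_p)\le\kappa(|V_p|_h^2|V_i|_h^2-\la V_p,V_i\ra_h^2)\le\kappa|V_p|_h^2|V_i|_h^2$, hence this term is $\le 2\kappa e(F)^2$. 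Altogether $\heat e(F)\le K(t)e(F)+2\kappa e(F)^2$ on $M\times[0,T]$.

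To kill the linear term I would pass to $\tilde e:=e^{-\lambda(t)}e(F)$ with $\lambda(t)=\int_0^tK(\tau)d\tau$; using $\lambda'=K(t)$ one gets $\heat\tilde e\le 2\kappa e^{\lambda(t)}\tilde e^2\le 2\kappa\exp(K_0)\tilde e^2$ since $\lambda(t)\le K_0$. Let $v$ solve the ODE $v'=2\kappa\exp(K_0)v^2$, $v(0)=e_0$, i.e. $v(t)=(e_0^{-1}-2\kappa\exp(K_0)t)^{-1}$; this is finite and nondecreasing exactly on $[0,T_1)$ for $T_1$ as in the statement, and there $v(t)\le 2e_0$ because $2\kappa\exp(K_0)T_1\le\tfrac12 e_0^{-1}$. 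Treating $v$ as a spatially constant supersolution, $\heat(\tilde e-v)\le 2\kappa\exp(K_0)(\tilde e+v)(\tilde e-v)$, so where $\tilde e-v\ge0$ this is $\le\Lambda(\tilde e-v)$ with $\Lambda:=2\kappa\exp(K_0)\sup_{M\times[0,T_1]}(\tilde e+v)<\infty$, finite since $e(F)$ is bounded by hypothesis and $v$ is bounded on $[0,T_1]$. Then $\phi:=e^{-\Lambda t}(\tilde e-v)$ satisfies $\heat\phi\le0$ whenever $\phi\ge0$, has $\phi(\cdot,0)\le0$ because $\tilde e(\cdot,0)=e(F)(\cdot,0)\le e_0=v(0)$, and $\phi_+$ is bounded so that the integrability condition \eqref{integration} holds. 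Theorem \ref{max} (applicable since $\sup|H|\le L$) then yields $\phi\le0$, i.e. $\tilde e\le v$, which is precisely $e(F)(\cdot,t)\le\exp(\lambda(t))v(t)$; together with $\lambda(t)\le K_0$ and $v\le 2e_0$ on $[0,T_1]$ this gives $e(F)\le 2e_0\exp(K_0)$. When $\kappa=0$ the quadratic term is absent, $\heat\tilde e\le0$, so comparison with the constant $e_0$ gives $\tilde e\le e_0$ for all $t\in[0,T]$ and $T_1=T$.

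The term-by-term bounds are routine; the step requiring care is the final comparison. One must choose $T_1$ so that the comparison function $v$ stays bounded — this short-time restriction is exactly what tames the quadratic reaction — and then verify the hypotheses of the noncompact maximum principle Theorem \ref{max}, the only nontrivial one being the weighted $L^2$ bound \eqref{integration}, which here follows at once from the assumed boundedness of $e(F)$. One should also keep track of the curvature sign convention in the Eells--Sampson estimate, consistent with $R_{1221}$ denoting the sectional curvature as fixed in \S\ref{s-estimates}.
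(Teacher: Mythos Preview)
Your proposal is correct and follows essentially the same route as the paper: derive $\heat e(F)\le K(t)e(F)+2\kappa e(F)^2$ from Lemma \ref{l-energy}, substitute $\tilde e=e^{-\lambda}e(F)$ to absorb the linear term, compare with the ODE solution $v$, and apply Theorem \ref{max}. The only point to tighten is the verification of the integrability condition \eqref{integration}: boundedness of $\phi_+$ alone is not enough---you also need $\int_M e^{-ar_0^2}\,dV_0<\infty$, which here follows because the hypotheses $2\Ric+H\ge -K(t)g$ and $|H|\le L$ force a Ricci lower bound and hence at most exponential volume growth for $g(0)$ (the paper invokes exactly this).
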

\begin{proof} Let $s=dF=F^\a_i dx^i\otimes\p_{y^\a}$ in local coordinates of $x\in M$ and $F(x,\cdot)$ in $N$. Let $S$ be the curvature tensor of $(N,h)$, then by Lemma \ref{l-energy}, we have
\bee (\ppt-\Delta_{  g(t)}) e(F)\le K(t) e(F)+2\kappa e^2(F).
\eee

 Let  $\lambda(t):=\int^t_0K(\tau)d\tau\leq K_0<+\infty.$ Then

\bee
\begin{split}
(\ppt-\Delta_{  g(t)})(\exp( -\lambda)e(F))\le & 2\kappa \exp(\lambda)(\exp( -\lambda)e(F))^2\\
\le& 2\kappa \exp(K_0)(\exp( -\lambda)e(F))^2
\end{split}
 \eee
Let $v(t)$ be the solution of the ODE
$$
v'=2\kappa \exp(K_0)v^2
$$
with $v(0)=e_0$. Then
$$
v(t)=\lf(e_0^{-1}-2\kappa \exp(K_0)t\ri)^{-1}.
$$
$v(t)$ is well-defined if $t< \lf( 2\kappa e_0 \exp(K_0)\ri)^{-1}$. Let $T_1=\min\{T, \frac 12\lf( 2\kappa e_0 \exp(K_0)\ri)^{-1}\}$.

Let   $\Theta(x, t):=\exp({-\lambda(t)})e(F)$. Then in $M\times[0,T_1]$,  we have
\bee
(\ppt-\Delta_{  g(t)})(\Theta -v )\leq 2\kappa \exp(K_0)(\Theta+v)\cdot(\Theta-v)\le C_1(\Theta-v)
\eee
for some constant  $C_1>0$ whenever $\Theta-v>0$.  Since $g(t)$ are uniformly equivalent to $g(0)$ and the Ricci curvature of $g(t)$ is bounded from below, one may apply the maximum principle Theorem \ref{max} to conclude that $\Theta\le v$ in $M\times[0, T_1]$. The result follows.

\end{proof}
We should remark that the  bounds of $e(F)$ and $T_1$ do not depend on $L$.

\vskip .1cm

In order to study the distance between $F(x,t)$ and the initial map $F(x,0)$, we need to estimate the norm of the tension field. Again by direct computations we have, see \cite{Hartman}:

\begin{lma}\label{l-tension}
\bee\label{evolution-t}
\heat |\tau(F)|^2=2S_{\delta\a\gamma\b}F^\delta_kF^\gamma_kF^\a_tF^\b_t-2F^\a_{tk}F^\a_{tk}-2F^\a_tH_{kl}F^\a_{kl}-F^\a_tF^\a_k(2\n_lH_{lk}+\n_kH_{ll}).
\eee
Here the computation is at $x$  and $F(x, t)$ under normal coordinates with respect to $g(t)$ and $h$.
\end{lma}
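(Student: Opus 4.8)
The plan is to obtain the identity from a Bochner-type computation on the pull-back bundle $f^{-1}(TN)$ over $M\times[0,T]$, together with the commutation rules for the connection $D$ of \S\ref{s-setup}, paying attention to the new terms generated by the time-dependence of $g$.

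I would first record two elementary reductions. Because $D$ is compatible with $h$ and $F$ solves $\ppt F=\tau(F)$, the chain-rule contribution of $\ppt h_{\a\b}$ along $F$ is exactly what is packaged into $D_t$, so $\ppt|\tau(F)|^2=2\la D_t\tau(F),\tau(F)\ra$; and the standard Bochner identity for the norm of a bundle-valued section gives $\Delta_{g(t)}|\tau(F)|^2=2\la\Delta_{g(t)}\tau(F),\tau(F)\ra+2|D\tau(F)|^2$. Hence
\bee
\heat|\tau(F)|^2=2\la D_t\tau(F)-\Delta_{g(t)}\tau(F),\,\tau(F)\ra-2|D\tau(F)|^2,
\eee
and in normal coordinates for $g(t)$ and $h$ at the point in question the last term is precisely $-2F^\a_{tk}F^\a_{tk}$, since there $F^\a_{tk}=(D_k\tau(F))^\a$.

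The core of the proof is to compute $D_t\tau(F)-\Delta_{g(t)}\tau(F)$. Writing $\tau(F)=g^{ij}(t)\,s_{i|j}$ with $s=dF$ and differentiating in $t$ produces three pieces. First, $\ppt g^{ij}=-H^{ij}$ gives $-H^{ij}s_{i|j}$. Second, commuting $D_t$ past the two spatial covariant derivatives inside $s_{i|j}$ uses $D_t s_i=D_i\tau(F)$ (valid since $D_tD_iF=D_iD_tF$) together with the curvature of the pull-back connection, $[D_t,D_j]=S(F_*\p_t,F_*\p_j)$; after contracting with $g^{ij}$ and invoking the symmetry of the Christoffel symbols of $g(t)$, the remaining second-derivative terms reassemble into $\Delta_{g(t)}\tau(F)$ plus the curvature term $g^{ij}S(\tau(F),s_j)s_i$. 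Third, the $t$-dependence of the Christoffel symbols contributes $-g^{ij}(\ppt\Gamma^k_{ij})s_k$, where $\ppt\Gamma^k_{ij}=\frac12g^{kl}(\n_iH_{jl}+\n_jH_{il}-\n_lH_{ij})$. Substituting into the displayed formula, pairing with $2\tau(F)$, and evaluating at a point in normal coordinates for $g(t)$ and $h$ (so that $g^{ij}=\delta^{ij}$, all Christoffel symbols vanish and $s^\a_{i|j}=F^\a_{ij}$), the curvature term becomes $2S_{\delta\a\ga\b}F^\delta_kF^\ga_kF^\a_tF^\b_t$, the $-H^{ij}s_{i|j}$ term becomes $-2F^\a_tH_{kl}F^\a_{kl}$, and the Christoffel term, after tracing $\ppt\Gamma^k_{ij}$, becomes $-F^\a_tF^\a_k(2\n_lH_{lk}+\n_kH_{ll})$. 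This is the asserted identity.

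I expect the only genuinely delicate point to be the bookkeeping of the two new terms coming from $\ppt g^{ij}$ and $\ppt\Gamma^k_{ij}$ — the signs, the order of differentiation, and the trace in $\ppt\Gamma$ must all be tracked carefully. Everything else (the curvature-of-$N$ term and $-2|D\tau(F)|^2$) is exactly as in the fixed-metric computation of Eells--Sampson and Hartman. I would also emphasize, when writing it out, that $\ppt|\tau(F)|^2$ must be taken with the bundle derivative $D_t$ rather than a naive $\p_t$ of components; this is what removes the $\ppt h$ contributions, and it is legitimate precisely because $F$ is a solution of the flow.
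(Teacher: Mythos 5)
Your strategy --- the Bochner decomposition $\heat|\tau(F)|^2=2\la D_t\tau(F)-\Delta\tau(F),\tau(F)\ra-2|D\tau(F)|^2$ (with the rough Laplacian on $F^{-1}TN$), followed by expanding $D_t\tau(F)$ for $\tau(F)=g^{ij}s_{i|j}$ and collecting the contributions of $\ppt g^{ij}$, of the pull-back bundle curvature, and of $\ppt\Gamma^k_{ij}$ --- is exactly the ``direct computation'' the paper points to (it gives no proof, citing Hartman for the fixed-metric case). The overall scheme is right, and the reduction to computing $D_t\tau-\Delta\tau$ together with $-2|D\tau(F)|^2=-2F^\a_{tk}F^\a_{tk}$ in normal coordinates is correct.

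The one step you defer --- the trace of $\ppt\Gamma^k_{ij}$ --- is where the signs live, and carried out carefully it gives the opposite sign from the one you quote. At a point in $g(t)$-normal coordinates,
\begin{equation*}
g^{ij}\ppt\Gamma^k_{ij}=\tfrac12\,\delta^{ij}\delta^{kl}\bigl(\n_iH_{jl}+\n_jH_{il}-\n_lH_{ij}\bigr)=\n_lH_{lk}-\tfrac12\,\n_kH_{ll},
\end{equation*}
so the Christoffel contribution to $\heat|\tau(F)|^2$ is $-2F^\a_tF^\a_k\,g^{ij}\ppt\Gamma^k_{ij}=-F^\a_tF^\a_k\bigl(2\n_lH_{lk}-\n_kH_{ll}\bigr)$, not $-F^\a_tF^\a_k(2\n_lH_{lk}+\n_kH_{ll})$. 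A useful check: with $H=-2\Ric$ (Ricci flow), the contracted second Bianchi identity gives $\n_lH_{lk}=\tfrac12\n_kH_{ll}$, so $g^{ij}\ppt\Gamma^k_{ij}\equiv0$ --- the familiar fact that the first-order correction to the varying Laplacian vanishes along Ricci flow --- which is consistent with $2\n_lH_{lk}-\n_kH_{ll}=0$ but not with $2\n_lH_{lk}+\n_kH_{ll}=-4\n_kR$. This sign appears to be inherited from the lemma as printed; it is harmless for the paper's subsequent estimate, which uses only $|\n H|$, but your write-up should actually perform the trace rather than assert its output, and record the resulting sign.
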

Using this we   obtain the following:

\begin{lma}\label{l-e-tension} With same notation and assumptions as in Lemma \ref{l-e-energy}. In additions, assume $|H|\le at^{-1}, |\nabla H|\le at^{-\frac32}$ for some $a>0$. Suppose
$$
e(F)\le \mathfrak{m}.
$$
in $M\times[0,T]$.
 Then there is a constant $C>0$ depending only on $m, n, T, a, K_0, \kappa, \mathfrak{m}$ such that
$$
|\tau(F)|(x,t)\le Ct^{-\frac12}
$$
on $M\times[0, T]$.
\end{lma}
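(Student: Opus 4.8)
The plan is to run a parabolic maximum principle argument on the quantity $|\tau(F)|^2$, but since the curvature assumption $|H|\le at^{-1}$, $|\nabla H|\le at^{-3/2}$ only gives singular coefficients near $t=0$, the correct test function to consider is $\Phi=t\,|\tau(F)|^2$ (or $e^{-\lambda(t)}t\,|\tau(F)|^2$ to absorb the $K_0$-contribution), for which one expects a bound of order $1$, hence $|\tau(F)|\le Ct^{-1/2}$. First I would start from the evolution equation in Lemma \ref{l-tension}. The term $-2F^\a_{tk}F^\a_{tk}=-2|DdF\text{'s time part}|^2\le 0$ is a good (negative) term and gets discarded. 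The curvature term of $N$ is controlled by $\Rm(h)\le\kappa$ and $e(F)\le\mathfrak m$: $2S_{\delta\a\gamma\b}F^\delta_kF^\gamma_kF^\a_tF^\b_t\le 2\kappa\, e(F)\,|\tau(F)|^2\le 2\kappa\mathfrak m\,|\tau(F)|^2$ (recall $\sum_\a|F^\a_t|^2=|\tau(F)|^2$ along the flow). For the remaining two terms involving $H$ and $\nabla H$, I would use Cauchy–Schwarz: $-2F^\a_tH_{kl}F^\a_{kl}\le \epsilon|DdF|^2+\epsilon^{-1}|H|^2|\tau(F)|^2$ and $-F^\a_tF^\a_k(2\nabla_lH_{lk}+\nabla_kH_{ll})\le |\tau(F)|\,|\nabla H|\,|dF|\le C|\nabla H|\,\mathfrak m^{1/2}|\tau(F)|$.

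Next, I would assemble the differential inequality. Because of the presence of $|DdF|^2$ with a favorable sign is not available here (the natural Bochner-type term $-2|DdF|^2$ does not appear in Lemma \ref{l-tension}; instead we only have $-2F^\a_{tk}F^\a_{tk}$), one cannot afford the $\epsilon|DdF|^2$ correction, so I would instead estimate $|F^\a_tH_{kl}F^\a_{kl}|$ differently. A cleaner route: note $F^\a_{kl}$ at a normal-coordinate point is the second fundamental form, and combine the evolution of $|\tau(F)|^2$ with that of $e(F)$, using the negative term $-2|DdF|^2$ from Lemma \ref{l-energy}, i.e. work with a combination like $\Psi = t|\tau(F)|^2 + A\, e(F)$ for a large constant $A$, so that the bad cross term $t\cdot F^\a_tH_{kl}F^\a_{kl}$ is absorbed by $-2A|DdF|^2$ at the cost of $t^2|H|^2|\tau(F)|^2\le a^2|\tau(F)|^2$ (here is exactly where $|H|\le a/t$ is used to kill the singularity). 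The $\nabla H$ term contributes $t|\nabla H||\tau(F)||dF|\le a t^{-1/2}\mathfrak m^{1/2}|\tau(F)| \le \tfrac12 t^{-1}\cdot(a\mathfrak m^{1/2}t^{1/2}|\tau(F)|)^2 \cdot t^{-1} + \dots$; more simply $t|\nabla H||\tau(F)||dF| \le \tfrac{1}{2}|\tau(F)|^2 + \tfrac{1}{2}t^2|\nabla H|^2|dF|^2 \le \tfrac12|\tau(F)|^2 + \tfrac12 a^2\mathfrak m$. Collecting terms and using $\heat(t|\tau(F)|^2) = |\tau(F)|^2 + t\heat|\tau(F)|^2$ and $e(F)\le 2e_0\exp(K_0)$ from Lemma \ref{l-e-energy}, one obtains $\heat \Psi \le C_1\Psi + C_2$ on $M\times[0,T]$ for constants depending only on $m,n,T,a,K_0,\kappa,\mathfrak m$, with $\Psi(\cdot,0)=A\,e(F)(\cdot,0)\le A\mathfrak m$ bounded.

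Then I would invoke the generalized maximum principle, Theorem \ref{max}: since $e(F)$ is uniformly bounded and (via standard interior parabolic estimates, or the a priori control already available) $|\tau(F)|$ grows at most polynomially, the function $\Psi$ — after subtracting the solution $\xi(t)$ of the ODE $\xi'=C_1\xi+C_2$, $\xi(0)=A\mathfrak m$ — satisfies the integrability hypothesis \eqref{integration}, and $\heat(\Psi-\xi)\le C_1(\Psi-\xi)\le 0$ whenever $\Psi-\xi\ge0$. Hence $\Psi\le\xi(t)\le \xi(T)=:C_3$ on $M\times[0,T]$, giving $t|\tau(F)|^2\le\Psi\le C_3$, i.e. $|\tau(F)|(x,t)\le C_3^{1/2}t^{-1/2}$. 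The main obstacle, and the point requiring care, is the handling of the $F^\a_tH_{kl}F^\a_{kl}$ term: since Lemma \ref{l-tension} offers no $-|DdF|^2$ of its own, one genuinely needs to couple with $e(F)$ (borrowing its $-2|DdF|^2$) and to exploit the precise rate $|H|\le a/t$ so that multiplication by the weight $t$ exactly neutralizes the singularity; a secondary technical point is justifying the application of Theorem \ref{max}, which requires knowing a priori that $|\tau(F)|$ does not blow up too fast in space — this follows from the local higher-derivative estimates already used in the proof of Lemma \ref{l-semi} together with the global bound on $e(F)$.
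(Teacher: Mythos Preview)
Your overall strategy---coupling the evolution of the tension field with that of the energy density so as to borrow the good $-2|DdF|^2$ term from Lemma \ref{l-energy}---matches the paper's approach. However, there is an arithmetic slip that breaks your argument: you claim $t^2|\nabla H|^2|dF|^2\le \tfrac12 a^2\mathfrak m$, but since $|\nabla H|\le at^{-3/2}$ one has $t^2|\nabla H|^2\le a^2t^{-1}$, so the correct bound is $\tfrac12 a^2\mathfrak m\, t^{-1}$. This $t^{-1}$ inhomogeneity is not integrable near $t=0$, and your test function $\Psi=t|\tau(F)|^2+Ae(F)$ only satisfies $\heat\Psi\le C_1\Psi+C_2+C_3 t^{-1}$, from which no bounded barrier $\xi$ with $\xi(0)=A\mathfrak m$ can be built. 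No rearrangement of Young's inequality seems to avoid this while keeping $|\tau(F)|^2$ as the primary quantity: absorbing the offending $t^{-1/2}|\tau(F)|$ into the good $-2A|DdF|^2$ term (via $|\tau(F)|\le\sqrt m\,|DdF|$) still leaves a $t^{-1}$ remainder.

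The paper circumvents this by working with $|\tau(F)|$ (first power) rather than $|\tau(F)|^2$: dividing the $\nabla H$ term in Lemma \ref{l-tension} by $2|\tau(F)|$ removes the $|\tau(F)|$ factor, leaving $\heat|\tau(F)|\le C\bigl(\kappa\mathfrak m\,|\tau(F)|+at^{-1}|DdF|+a\mathfrak m^{1/2}t^{-3/2}\bigr)$. After multiplying by $t$, the $\nabla H$ contribution becomes $\sim t^{-1/2}$, which \emph{is} integrable. The paper then couples not with $Ae(F)$ but with $t^{1/2}\Theta$, where $\Theta=e^{-\lambda(t)}e(F)$; the $t^{1/2}$ weight is chosen so that the resulting good term $-C_2^{-1}t^{1/2}|DdF|^2$ absorbs $C_1|DdF|$ at the cost of another $t^{-1/2}$. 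Subtracting $C_5t^{1/2}$ then yields a clean subsolution vanishing at $t=0$.

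A secondary point: your verification of the integrability hypothesis \eqref{integration} in Theorem \ref{max} via ``standard interior parabolic estimates'' is imprecise---local estimates do not give spatial growth control on a noncompact manifold. The paper instead integrates the inequality $\heat\Theta\le -C_2^{-1}|DdF|^2+C_2$ against a cutoff to obtain $\int_0^{T}\int_{B_0(R)}|DdF|^2\,dV_0\,dt\le CV_{g(0)}(2R)$, which with volume comparison gives the required exponential spatial bound. Note this yields $L^2$ control of $|\tau(F)|\le\sqrt m\,|DdF|$, which is exactly what is needed for the first-power test function $t|\tau(F)|$; your choice of $t|\tau(F)|^2$ would instead demand weighted $L^4$ control of $|\tau(F)|$, which is not available from this argument.
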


\begin{proof}

By Lemma \ref{l-tension}, we have
 \bee
 \begin{split}
\heat |\tau(F)|^2\leq & 2S_{\delta\a\gamma\b}F^\delta_kF^\gamma_kF^\a_tF^\b_t-2F^\a_{tk}F^\a_{tk}-2F^\a_tH_{kl}F^\a_{kl}-F^\a_tF^\a_k(2\n_lH_{lk}+\n_kH_{ll})\\
\le& C(m, n)\lf(\kappa e(F)|\tau(F)|^2+ at^{-1}|\tau( F)||Dd F|+at^{-\frac32}e(F)^{\frac 12}|\tau(F)|\ri)\\
& -2F^\a_{tk}F^\a_{tk}.
\end{split}
\eee
 So at the point where $|\tau(F)|>0$,
\bee
\heat |\tau(F)|\le C(m, n)\lf(\kappa e(F)|\tau(F)| + at^{-1} |Dd F|+at^{-\frac32}e(F)^{\frac 12} \ri)
\eee
and
\bee
\begin{split}
\heat(t|\tau(F)|)\le & C(m, n) \lf(\kappa e(F)t|\tau(F)| + a  |Dd F|+at^{-\frac12}e(F)^{\frac 12} \ri)+|\tau (F)|\\
\le &C_1( |DdF|+t^{-\frac12}).
\end{split}
\eee
because $|\tau(F)|\le |DdF|$ and $t\le T$. Here and below $C_i$ will denote a positive constant depending only on $m, n, T, K_0, \kappa, a, \mathfrak{m}$.

On the other hand, by Lemma \ref{l-energy}, we have
\bee
\heat e(F)\le K(t)e(F)+2\kappa e^2(F)-2|DdF|^2.
\eee
Let $\lambda(t)=\int_0^tK(s)ds$ and $\Theta =\exp(-\lambda (t))e(F)$, then
\bee
(\ppt- \Delta)\Theta\le {-{C_2}^{-1}|DdF|^2+C_2}.
\eee
So
\bee
(\ppt- \Delta)t^{\frac12}\Theta\le -C_2^{-1}t^\frac12| DdF|^2+C_3t^{-\frac12}.
\eee
This implies
\bee
(\ppt- \Delta)\lf(t|\tau(F)|+t^{\frac12}\Theta\ri)\le C_4t^{-\frac12}
\eee
Hence
\bee
(\ppt- \Delta)\lf(t|\tau(F)|+t^{\frac12}\Theta-C_5t^\frac12\ri)\le 0.
\eee

Since we do not assume $|\tau(F)|$ is bounded, we need to estimate the integral of $|\tau(F)|^2$ in order to apply   the maximum principle.
Recall
\bee
(\ppt- \Delta)\Theta\le  { -{C_2}^{-1}|  DdF|^2+C_2}
\eee
Multiplying a cutoff function to the above inequality and then integrating by part, {one can prove that
$$
\int_0^{T_1}\int_{B_0(R)}|DdF|^2 dV_0dt\le CV_{g(0)}(2R)
$$}
 for some constant $C$ independent of $R$. Here we have used the fact that  $\Theta$ is uniformly bounded and $g(t)$ are uniformly equivalent to $g(0)$. Using the fact that $V_{g(0)}(2R)\le \exp(C'(R+1))$, for some $C'>0$ independent of $R$, the lemma follows from the maximum principle Theorem \ref{max}.
\end{proof}

\begin{rem} In the above lemma, we do not assume that $|\tau(F)|$ is bounded. In particular, we do not assume the tension field of the initial data is bounded.
\end{rem}

 \subsection{Short-time Existence}\label{existence}

 Let $M^m$ be a noncompact manifold and let $g(t)$ be a smooth family of complete metrics defined on $M\times[0,T]$ so that
\be\label{e-g-flow}
\frac{\p }{\p t}g(x,t)=H(x,t).
\ee
Let $ (N^n,h)$  be another complete Riemannian manifold. Consider the following assumptions.

\vskip .3cm

\begin{itemize}
  \item[{\bf (a1)}] $2\Ric(g(t))+H(t)\ge -K(t)g(t)$ in $M\times [0,T]$ where $K(t)\ge0$ and
  $$K_0=:\int_0^TK(t)dt<\infty.
      $$
  \item[{\bf (a2)}] $|H|\le at^{-1}$  and $|\nabla H|\le at^{-\frac32}$ for some $a>0$. Here the norm and the covariant derivative are with respect to $g(t)$.
  \item[{\bf (a3)}] The curvature of $h$ is bounded from above: $\Rm(h)\le \kappa$ for some $\kappa\ge0$..
\end{itemize}

\vskip .3cm

We will prove the main short time existence result of harmonic map heat flow Theorem \ref{t-existence-main} in this subsection.

As a corollary, we remove a condition that the image of the initial map is bounded in  the  short time existence  result  \cite[Theorem 3.4]{Li-Tam}, provided there is a suitable exhaustion function and curvature of  the target manifold is bounded from above.
\begin{cor}\label{c-LT}
Let $(M^m, g)$ be a complete noncompact Riemannian manifold with $\Ric(g)\ge -Kg$ for some $K\geq 0$ and let $(N^n,h)$ be another complete noncompact manifold with $\Rm(h)\le \kappa$ for some $\kappa\ge 0$. Suppose there is a smooth function $\gamma$ on $M$ satisfying:
$$
d (p,x)+1\leq\gamma(x)\leq  d(p,x)+C_0$$ and $$
|\nabla^k \gamma| \le C_0, \quad k=1,2$$
for some $C_0>0$ where $d(p,x)$ is the distance function on $M$ and $p\in M$ is a fixed point. Then for any smooth map $f: M\to N$ with energy density uniformly bounded by $e_0$, there exists a solution to the harmonic map  heat flow $F$ from $M\times[0,T_0]\to N$ with initial value $F(x,0)=f(x)$, where $T_0=  C_1\kappa^{-1}$ for some $C_1$ depending only on $e_0, K$.  Moreover,
\bee
\sup_M e(F(\cdot,t))\le 2e_0\exp(Kt).
\eee
In particular, if $\kappa=0$, then the heat flow has long time solution.
\end{cor}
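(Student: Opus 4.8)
The plan is to obtain Corollary \ref{c-LT} directly from Theorem \ref{t-existence-main} by specializing to the time-independent family $g(t)\equiv g$ on $M\times[0,T]$, where $T>0$ will be fixed below, so that $H=\frac{\p}{\p t}g\equiv 0$. One then verifies the hypotheses of Theorem \ref{t-existence-main} one at a time. Assumption {\bf(a2)} holds for every $a>0$ since $|H|_g=|\n H|_g\equiv 0$. Assumption {\bf(a3)} is exactly the hypothesis $\Rm(h)\le\kappa$. For {\bf(a1)}, the hypothesis $\Ric(g)\ge -Kg$ gives $2\Ric(g)+H=2\Ric(g)\ge -2Kg$, so we may take $K(t)\equiv 2K$ and then $K_0=\int_0^T K(t)\,dt=2KT<\infty$. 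Because the metric is fixed, $g(T)=g$, hence $d_T=d$ and $\n_T=\n$, so the function $\gamma$ supplied in the hypothesis of the corollary is precisely the exhaustion function that Theorem \ref{t-existence-main} requires. Finally $e(f;g(t))=e(f;g)\le e_0$ for all $t$, so $f$ has uniformly bounded energy density on $M\times[0,T]$.

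It remains to fix $T$ and to sharpen the conclusions. Assume first $\kappa>0$ and take $T=T_0$, where $T_0>0$ is the unique positive solution of $2\kappa e_0\exp(2KT_0)\,T_0=\tfrac12$ (existence and uniqueness hold because the left-hand side is continuous, vanishes at $0$, and tends to $+\infty$); this $T_0$ satisfies $T_0\le(4\kappa e_0)^{-1}$ and is the existence time appearing in the statement. With this choice $K_0=2KT_0$ and $\min\{T,\tfrac12(2\kappa e_0\exp(K_0))^{-1}\}=T_0$, so Theorem \ref{t-existence-main} produces a smooth solution $F$ of the harmonic map heat flow on $M\times[0,T_0]$ with $F(\cdot,0)=f$, $\sup e(F)\le C$ and $|\tau(F)|_g\le Ct^{-1/2}$. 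Since $e(F)$ is now known to be bounded and $|H|\equiv 0$, the hypotheses of Lemma \ref{l-e-energy} are met with $K(t)\equiv 2K$, and its conclusion yields $e(F)(\cdot,t)\le \exp(2Kt)(e_0^{-1}-2\kappa\exp(2KT_0)t)^{-1}\le 2e_0\exp(2Kt)$ on $[0,T_0]$, which is the asserted energy bound. When $\kappa=0$, the last clause of Theorem \ref{t-existence-main}, applied with $T$ arbitrary, gives a solution on $M\times[0,T]$ for every $T>0$, i.e. a long-time solution, and Lemma \ref{l-e-energy} (now with comparison function $v\equiv e_0$) gives $e(F)(\cdot,t)\le e_0\exp(2Kt)$.

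Essentially all of this is routine: the content is the verification of {\bf(a1)}--{\bf(a3)} and of the exhaustion-function condition, all of which are immediate once the metric is fixed. The only place calling for a little care is the choice of $T$: because $K_0=2KT$ in Theorem \ref{t-existence-main} grows with the auxiliary parameter $T$, one must fix $T$ self-consistently, as above, so that the resulting existence time and energy estimate do not depend on an artificial choice of $T$ — this is exactly why the corollary records the existence time $T_0$ in the implicit, $\kappa^{-1}$-scaled form.
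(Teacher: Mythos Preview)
Your proof is correct and follows exactly the approach the paper intends: Corollary \ref{c-LT} is stated there as an immediate specialization of Theorem \ref{t-existence-main} to the constant family $g(t)\equiv g$, $H\equiv 0$, without a separate proof, and you have carried out that specialization carefully. Your self-consistent choice of $T$ and the appeal to Lemma \ref{l-e-energy} for the sharp energy bound go slightly beyond what the paper writes out, but this is the right way to make the deduction rigorous; note that the honest bound the argument delivers is $2e_0\exp(2Kt)$ (since {\bf(a1)} forces $K(t)=2K$), which matches your computation.
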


We can also obtain a short time solution of the harmonic map heat flow coupled with the Ricci flow Theorem \ref{t-Ricci} mentioned in the introduction.

Before we prove  Theorem \ref{t-existence-main}, we need the following extension lemma which will be used later.

\begin{lma}\label{l-extension}
Let $g(t)$ be a smooth family of complete metrics on $M$, $t\in [0,T]$, and $(N,h)$ is another smooth complete manifold. Let $0<T_1<T$. Suppose $F_1$ is a smooth solution to the harmonic map heat flow from $M\times[0,T_1]$ to $N$ and $F_2$ is a smooth solution to the harmonic map heat flow  $M\times[T_1,T]$ to $N$. Suppose $F_1=F_2$ at $t=T_1$. Let
\bee
F(x,t)=\left\{
         \begin{array}{ll}
           F_1(x,t), & \hbox{if $(x,t)\in M\times[0,T_1]$;}\\
F_2(x,t), & \hbox{if $(x,t)\in M\times[T_1,T]$.}
         \end{array}
       \right.
\eee
Then $F$ is a smooth solution to the harmonic map heat flow from $M\times[0,T]$ to $N$.
\end{lma}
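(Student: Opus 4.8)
The plan is to show that the piecewise-defined map $F$ is $C^\infty$ across the slice $t=T_1$, after which the fact that it solves the harmonic map heat flow on each piece immediately gives that it solves the flow on all of $M\times[0,T]$. Since $F_1$ and $F_2$ are smooth on their respective (closed) time intervals and agree at $t=T_1$, the map $F$ is certainly continuous, and all \emph{spatial} derivatives of $F_1$ and $F_2$ match at $t=T_1$ as well, since they are spatial derivatives of the common function $F_1(\cdot,T_1)=F_2(\cdot,T_1)$. So the only issue is matching of the time derivatives, and more generally of mixed space-time derivatives, across $t=T_1$.

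First I would observe that $\frac{\p}{\p t}F_1$ at $t=T_1$ equals $\tau(F_1)(\cdot,T_1)$ computed with respect to $g(T_1)$ and $h$, and similarly $\frac{\p}{\p t}F_2$ at $t=T_1$ equals $\tau(F_2)(\cdot,T_1)$; since $F_1(\cdot,T_1)=F_2(\cdot,T_1)$ and the metrics agree at $t=T_1$, these tension fields coincide. Hence the first time derivatives match. To handle higher derivatives, the cleanest approach is to argue inductively: using the local expression \eqref{e-harmoinic-map-flow-2} for the flow, any derivative $D_t^\a D_x^\b F$ with $\a\ge 1$ can be rewritten, via one application of the equation, as $D_t^{\a-1}D_x^{\b}\big(g^{ij}(f^\a_{ij}-\Gamma^k_{ij}f^\a_k+\wt\Gamma^\a_{\b\ga}f^\b_if^\ga_j)\big)$, i.e. as a universal smooth expression in $g(t)$ and its $t$- and $x$-derivatives, in $h$ and its derivatives at the point $F(x,t)$, and in the $x$-derivatives and \emph{lower-order} $t$-derivatives of $F$. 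Evaluated at $t=T_1$ from either side, this expression involves only: the common germ $F(\cdot,T_1)$ and its spatial derivatives, the germ of $g$ at $t=T_1$ (which is the same whether approached from $[0,T_1]$ or $[T_1,T]$, as $g$ is globally smooth on $M\times[0,T]$), and lower-order $t$-derivatives of $F_1$ resp.\ $F_2$ which agree by the inductive hypothesis. Therefore all one-sided derivatives of $F_1$ and $F_2$ agree at $t=T_1$, so $F\in C^\infty(M\times[0,T],N)$.

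Finally, with smoothness established, $F$ satisfies $\frac{\p}{\p t}F=\tau(F)$ on $M\times[0,T_1]$ and on $M\times[T_1,T]$, hence on all of $M\times[0,T]$ by continuity of both sides, and $F(x,0)=F_1(x,0)$ is the prescribed initial map; this is exactly the assertion.

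I do not expect a genuine obstacle here — this is a standard gluing/removable-singularity-in-time argument. The only point requiring a little care is the bookkeeping in the inductive step: one must be sure that when the equation is used to trade a $t$-derivative for spatial derivatives, the resulting expression really does depend on $F$ only through quantities that are already known to match (spatial derivatives of the common value, and strictly lower-order time derivatives), together with the ambient data $g$ and $h$ which are smooth across $t=T_1$ by hypothesis. Since one can always reduce the number of $t$-derivatives by one at the cost of at most two extra $x$-derivatives, the induction on the total order $2\a+\b$ closes cleanly.
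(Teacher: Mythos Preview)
Your proposal is correct and follows essentially the same approach as the paper: work in local coordinates near $(p,T_1)$, note that all spatial derivatives of $F_1$ and $F_2$ agree at $t=T_1$, and then use the equation \eqref{e-harmoinic-map-flow-2} repeatedly to bootstrap agreement of all mixed space-time derivatives. The only minor bookkeeping remark is that the induction is really on the number of time derivatives $\a$ (each use of the equation drops $\a$ by one while raising the spatial order by at most two), rather than on $2\a+\b$ itself; but your main argument already says exactly this (``strictly lower-order $t$-derivatives''), so the proof goes through.
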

\begin{proof} It is sufficient to show that $F$ is smooth near $(p,T_1)$ for all $p.$  Consider local coordinates $x^i$ near a point $p$ in $M$ and $y^\a$ near the point $F(p, T_1)$ in $N$. Near $(p,T_1)$,
\bee
\begin{split}
\ppt F_1^\a(x, t)=&g^{ij}(x,t)((F_1 ^\a)_{ij}(x,t)\\
&-\Gamma^k_{ij}(x,t)(F_1^\a)_k(x,t)+\wt\Gamma^\a_{\b\gamma}(F_1(x,t))
(F_1^\b)_i(x,t)(F_1^\gamma)_j(x,t)).
\end{split} \eee
Here $(F_1^\a)_i, (F_1^\a)_{ij}$ denote  the partial derivatives of $F_1^\a$ and $\Gamma, \wt \Gamma$ are the Levi-Civita connections of Riemannian manifolds $(M, g(t))$ and $(N, h)$ respectively. Similarly, we have the corresponding equations for $F_2$. Since $F_1, F_2$ are smooth up to $T_1$, we conclude that as $(x,t), (x',t')\to (p,T_1)$ with $t>T_1>t'$,  all the corresponding space derivatives of $F_2(x,t)$ and $F_1(x',t')$ will converge to the same limit. From the equations, we conclude that $\p_tF_2(x,t)$ and $\p_tF_1(x',t')$ will converge to the same limit. Differentiate the equation with respect to $x$, we can conclude that $\p_t\p^l_x F_1(x',t')$ and $\p_t\p^l_x F_2(x ,t)$ will converge to the same limit. Differential the equation with respect to $t$ we conclude that $\p_t\p_tF_2(x,t)$ and $\p_t\p_tF_1(x',t')$ will converge to the same limit. Continue in this way, one can see that the lemma is true.

\end{proof}

The proof of Theorem \ref{t-existence-main} follows from the following special case so that condition {\bf (a1)} is replaced by a condition on $\Ric(g(t))$ and {\bf (a2)} is replaced by the conditions that $|H|, |\n H|$ are uniformly bounded.
\begin{prop}\label{p-existence-2}
Let $M^m$ be a noncompact manifold and let $g(t)$ be a smooth family of complete metrics defined on $M\times[0,T]$ so that
$$
\frac{\p }{\p t}g(x,t)=H(x,t).
$$
Let $(N^n,h)$ be another complete manifold.
 Suppose  {\bf  (a3)} is  satisfied. Assume   $|H|_{g(t)}\le L$, $|\nabla H|_{g(t)}\le L$    in $M\times[0,T]$ for some constant $L>0$ and $\Ric(g(t))\ge -K(t) g(t)$ for some $K(t)\ge0$ so that
 $$
 K_0:=\int_0^TK(t)dt<\infty.
  $$
  Moreover, assume  there exists a smooth exhaustion function $\gamma$ on $M$ and $C_0>0$ such that$$
       d_{T}(p,x)+1\leq\gamma(x)\leq  d_{T}(p,x)+C_0$$ and $$
|\nabla^k_T \gamma| \le C_0
$$for $1\le k\le 2$, where $d_{T}$ is the distance function and $\nabla_T$ is the covariant derivative   with respect to $g(T)$. Let $f:(M,g_0)\to (N,h)$ be a smooth map such that
$$
\sup_M e(f)\le e_0.
$$
 Then there exists a smooth solution $F$ to the heat flow for harmonic map with initial map $f$ defined on $M\times[0,T_0]$ such that
$$
\sup_{M\times [0,T_0]}e(F)<\infty
$$
for some $0<T_0\le T$ depending only on $m, n, T, K_0, \kappa, L,  e_0, C_0$.
\end{prop}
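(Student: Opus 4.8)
The plan is to realise \eqref{harmonic-map-flow} as a semilinear heat system on the noncompact domain, to construct it by the Eells--Sampson iteration of Lemma \ref{l-semi} after a truncation of $f$ that makes the target's second fundamental form effectively bounded, and to control everything uniformly by the a priori estimates of Lemma \ref{l-e-energy} and Lemma \ref{l-e-tension} together with the generalised maximum principle Theorem \ref{max}. Fix an isometric embedding $\iota\colon(N,h)\hookrightarrow\R^q$ and let $\mathbf A$ be its second fundamental form, extended smoothly to a tubular neighbourhood of $N$; a map $F\colon M\times[0,T_0]\to N$ solves \eqref{harmonic-map-flow} if and only if, as an $\R^q$-valued map, it solves $\heat F=\mathbf A(F)(\n F,\n F)$. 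The sole obstruction to invoking Lemma \ref{l-semi} directly is that $|\mathbf A|$ need not be bounded on $N$ when $f(M)$ is unbounded.

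To get around this, set $\Omega_j=\{\gamma<j\}$; these exhaust $M$, are bounded, and — because $|\nabla^k_T\gamma|\le C_0$ for $k\le 2$ and $g(t)$ is uniformly equivalent to $g(T)$ (from $|H|\le L$) — have uniformly controlled geometry. Since $f(\overline{\Omega_j})$ is compact, extend $f|_{\overline{\Omega_{j-1}}}$ to a smooth map $f_j\colon M\to N$ equal to $f$ on $\Omega_{j-1}$, constant outside $\Omega_j$, with precompact image and $\sup_M e(f_j;g(0))\le Ce_0$ for a constant $C$ depending only on $C_0,L,T$; the bound $e(f)\le e_0$ forces $f$ to vary slowly across the collar $\Omega_j\setminus\Omega_{j-1}$, and it is here that the Hessian bound on $\gamma$ is used. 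Let $C_1,T_0$ be the constants furnished, from $m,n,T,K_0,\kappa,L$ and $Ce_0$, by Lemma \ref{l-e-tension} and Lemma \ref{l-e-energy}, put $\rho_0=2C_1\sqrt{T_0}$, and choose a smooth globally bounded tensor $\mathbf A_j$ on $\R^q$ agreeing with $\mathbf A$ on the $2\rho_0$-neighbourhood of $f_j(M)$.

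Now apply Lemma \ref{l-semi} with nonlinearity $(\mathbf A_j)^A_{BC}$ and initial map $f_j$ (bounded, with bounded gradient): one obtains a smooth solution $u_j$ of $\heat u_j=\mathbf A_j(u_j)(\n u_j,\n u_j)$, $u_j(\cdot,0)=f_j$, on $M\times[0,T^{(j)}]$ with $u_j,\n u_j$ bounded. Because $f_j$ maps into $N$ and $\mathbf A_j=\mathbf A$ near $N$, the bounded function $\psi=\tfrac12\,\mathrm{dist}(u_j,N)^2$ satisfies $\heat\psi\le C\psi$ with $\psi(\cdot,0)\equiv0$, so Theorem \ref{max} forces $\psi\equiv0$ and $u_j$ maps into $N$. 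A continuity argument then closes: on the largest interval on which $u_j$ is an honest solution of \eqref{harmonic-map-flow} with target $(N,h)$ and $e(u_j)\le 2Ce_0\exp(K_0)$, Lemma \ref{l-e-tension} gives $|\partial_t u_j|=|\tau(u_j)|\le C_1t^{-1/2}$, hence $\mathrm{dist}_h(u_j(x,t),f_j(x))\le 2C_1\sqrt t\le\rho_0$, so $u_j$ stays well inside $\{\mathbf A_j=\mathbf A\}$; Lemma \ref{l-e-energy} — whose right-hand side involves only $\Rm(h)\le\kappa$, $K(t)$ and $e(u_j)$, not $\mathbf A_j$, and which applies Theorem \ref{max} with the integral condition supplied by the boundedness of $e(u_j)$ and the bounded geometry of $g(0)$ — re-establishes the energy bound; and re-running Lemma \ref{l-semi} from an interior time, with step length governed only by $\sup e(u_j)$, extends $u_j$ past any $t<T_0$. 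Thus $u_j$ exists on $M\times[0,T_0]$ with $\sup e(u_j)\le 2Ce_0\exp(K_0)$ uniformly in $j$.

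Finally, from this uniform bound on $e(u_j)$, parabolic Schauder estimates bootstrapped exactly as at the end of the proof of Lemma \ref{l-semi} give uniform $C^\infty_{\mathrm{loc}}$ bounds for the $u_j$; a diagonal subsequence converges in $C^\infty_{\mathrm{loc}}$ to a smooth $F\colon M\times[0,T_0]\to N$ solving \eqref{harmonic-map-flow} with $\sup_{M\times[0,T_0]}e(F)\le 2Ce_0\exp(K_0)<\infty$, and since $u_j(\cdot,0)=f_j=f$ on $\Omega_k$ once $j>k+1$ we get $F(\cdot,0)=f$. I expect the real difficulty to be exactly this uniformity in $j$: the construction is intrinsically local in the target, and pushing it to all of $M$ with estimates that do not feel the ($j$-dependent, unboundedly large) size of $\mathbf A_j$ rests on two facts — that the target's second fundamental form drops out of the evolution of the energy density, so that the constants in Lemma \ref{l-e-energy} and Lemma \ref{l-e-tension} depend only on the admissible data, and that Theorem \ref{max}, fed by the boundedness of $e(u_j)$ and $\mathrm{dist}(u_j,N)$ and by the uniform equivalence of the $g(t)$, lets one run those estimates over the noncompact $M$; the remaining point, controlling the collar so that $\sup_M e(f_j)\le Ce_0$, is where the hypothesis on $\gamma$ is needed.
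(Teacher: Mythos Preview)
There are two genuine gaps.

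First, the truncation of the initial map does not work in general. You want $f_j=f$ on $\Omega_{j-1}$, $f_j$ constant outside $\Omega_j$, and $\sup_M e(f_j)\le Ce_0$. But while the collar $\Omega_j\setminus\Omega_{j-1}$ has $g(T)$-\emph{width} of order $1$ (from $|\nabla_T\gamma|\le C_0$), its diameter in the tangential directions is uncontrolled, and the values of $f$ on $\partial\Omega_{j-1}$ need not cluster near any single point of $N$. A simple model is $M=\R^2$, $N=S^1$, $f(x)=x/|x|$: here $e(f)\to 0$, yet on every large circle $f$ covers $S^1$ once, so no map equal to $f$ inside and constant outside can have bounded energy density across the annulus (the extension is even homotopically obstructed). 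The Hessian bound on $\gamma$ does not help.

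Second, and independently, Lemma~\ref{l-semi} --- through Theorem~\ref{int-G} and Proposition~\ref{p-heat-grad} --- requires $|\Rm(g(t))|\le K$ on $M\times[0,T]$. The proposition assumes only $\Ric(g(t))\ge -K(t)g(t)$ and $|H|,|\nabla H|\le L$; the full curvature of $g(t)$ may be unbounded on $M$, so you cannot run the iteration on $(M,g(t))$ as written. The paper handles both issues at once by restricting to the bounded sublevel set $U_\rho=\{\gamma<\rho\}$ and conformally completing: with $\wt g(t)=e^{2\phi}g(t)$, Lemma~\ref{l-conformal-3} shows $(U_\rho,\wt g(t))$ is complete with $|\wt H|,|\tn\wt H|$ and the Ricci lower bound controlled only by $L,T,K_0,C_0$, and $|\Rm(\wt g)|$ bounded (by a $\rho$-dependent constant, using that $g$ is smooth on the compact $\overline{U_\rho}$). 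The Hessian bound on $\gamma$ is used precisely here, to control $\nabla^2\phi$. Since $f(U_\rho)$ is automatically precompact, no modification of $f$ is needed: one embeds a neighbourhood of the image and applies Lemma~\ref{l-semi} on $(U_\rho,\wt g)$. The continuation to $[0,T_0]$ then proceeds as you outlined --- the step from Lemma~\ref{l-semi} does depend on $\rho$ (and on $\sup|\pi^A_{BC}|$, not only on $\sup e$ as you wrote), but this is harmless because the $\rho$-independent a~priori bounds of Lemmas~\ref{l-e-energy} and \ref{l-e-tension} allow finitely many $\rho$-dependent steps to reach the fixed $T_0$; finally one lets $\rho\to\infty$.
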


Let us  prove Theorem \ref{t-existence-main} assuming the proposition is true.

\begin{proof}
Fix any small $T>s>0$. Let $ g^{(s)} (t):=g(t+s), 0\leq t\leq T-s$. Then
$$
\ppt g^{(s)}(t)=H(s+t)=:H^{(s)}(t).
$$
By condition  {\bf(a2)}, there is a constant $C_1=C_1(a,s)$ such that $|H^{(s)} (t)|, |\n H^{(s)}(t)|$ are uniformly bounded by $C_1$ on $M\times[0,T-s]$.  Here the norm and covariant derivative are with respect to $g^{s}(t)$. By conditions {\bf (a1)}, we have
$$
2\Ric(g(t))\ge -(K(t)+C_1)g(t).
$$
Together with {\bf (a3)} and the condition on $\gamma$, by Proposition \ref{p-existence-2}, for any smooth map $\wt f:M\to N$ with energy density bounded by $\wt e_0$, and for any $T-s>t_0>0$,  the harmonic map heat flow has a solution $\wt F$ in $M\times[t_0,t_0+\wt T_0]$ with initial map $\wt f$ so that
$$
\sup_{M\times[t_0,t_0+\wt T_0]}e(\wt F)<\infty.
$$
Here $\wt T_0$ depends only on $m, n, T-s, K_0, \kappa, \wt e_0, C_1, C_0$ as long as $t_0+\wt T_0\le T-s$.

 In particular,  the harmonic map heat flow has a solution $ F^{(s)}$ in $M\times[0,\wt T_1]$ with initial map $  f$ so that
$$
\sup_{M\times[0,\wt T_1]}e(F^{(s)})<\infty.
$$
Here $ \wt T_1$ depends only on $m, n, T-s, K_0, C_1, \kappa,   e_0, C_0$ as long as $ \wt T_1\le T-s$. By Lemma \ref{l-e-energy}, we conclude that
$$
\sup_{M\times[0,\wt T_1]}e( F^{(s)})\le 2e_0\exp(K_0)
$$
provided $\wt T_1\le \frac12(2\kappa e_0\exp(K_0))^{-1}$. If this is the case, then one can extend the solution to $[0, \wt T_1+\wt T_0]$ by Lemma \ref{l-extension}, provided $\wt T_1+\wt T_0\le T-s$, where $\wt T_0$ depends only on $m, n, T-s, K_0, C_1,  \kappa, C_0$
and
$$
\wt e_0:=2e_0\exp(K_0).
$$
Continue in this way, we conclude that the harmonic map heat flow has a solution $ F^{(s)}$ in $M\times[0,T_s]$ with initial map $  f$ so that
$$
\sup_{M\times[0,  T_s]}e(F^{(s)})\le 2e_0\exp(K_0)
$$
where
$$
T_s= \min\{T-s, \frac12\lf( 2\kappa e_0\exp(K_0) \ri)^{-1}\}.
$$
By Lemma \ref{l-e-tension}, we have
$$
|\tau(F^{(s)})|_{g^{(s)}(t)}\le C_2t^{-\frac12}
$$
where $C_2$ depends only on $m, n, e_0, a, K_0, \kappa$.

Hence   $d_N(F^{(s)}(x,t),f(x))\le C_3$ in $M\times[0,T_s]$ for some $C_3$ independent of $s$. In local coordinates $F^{(s)}$ satisfies a system of semi-linear equations

\bee
\ppt (F^{(s)})^\a(x, t)=g_s^{ij}(x, t)((F^{(s)})^\a_{ij}-\Gamma^k_{ij}(g_s(x, t))(F^{(s)})^\a_k+\wt\Gamma^\a_{\b\gamma}(F^{(s)}) ^\b_i(F^{(s)})^\gamma_j).
\eee
Moreover, $|\nabla (F^{(s)})^\a|$ are uniformly bounded. Then we can argue as in the proof of Lemma \ref{l-semi} to conclude that for any precompact domain $\Omega\subset M$, all orders of derivatives of $ F^{(s)} $ are uniformly bounded in $\Omega\times[0,T_s]$. Passing to a subsequence, we conclude that $ F^{(s)} $ will converge on $M\times[0,T_0]$ to a solution of the harmonic map heat flow coupled with $g(t)$ on $M\times[0,T_0]$ with initial map being $f$ such that $e(F)$ and $|\tau(F)|$ have bounds as stated in the theorem.

\end{proof}

\subsection{Proof of Proposition \ref{p-existence-2}}

Our method is to use conformal change to find solutions on compact domains. We then obtain estimates for the energy density and the norm of the tension field in order to take limit as in the proof of Theorem \ref{t-existence-main}.
Let $\chi\in (0,\frac{1}{8})$, $f:[0,1)\to[0,\infty)$ be the function:
\be\label{e-exh-1}
 f(s)=\left\{
  \begin{array}{ll}
    0, & \hbox{$s\in[0,1-\chi]$;} \\
    -\displaystyle{\log \lf[1-\lf(\frac{ s-1+\chi}{\chi}\ri)^2\ri]}, & \hbox{$s\in (1-\chi,1)$.}
  \end{array}
\right.
\ee

Let   $\varphi\ge0$ be a smooth function on $\R$ such that $\varphi(s)=0$ if $s\le 1-\chi+\chi^2 $, $\varphi(s)=1$ for $s\ge 1-\chi+2 \chi^2 $
\be\label{e-exh-2}
 \varphi(s)=\left\{
  \begin{array}{ll}
    0, & \hbox{$s\in[0,1-\chi+\chi^2]$;} \\
    1, & \hbox{$s\in (1-\chi+2\chi^2,1)$.}
  \end{array}
\right.
\ee
such that $\displaystyle{\frac2{\chi^2}}\ge\varphi'\ge0$. Define
 $$\mathfrak{F}(s):=\int_0^s\varphi(\tau)f'(\tau)d\tau.$$

From \cite{Lee-Tam-2017-2}, we have:

\begin{lma}\label{l-exhaustion-1} Suppose   $0<\chi<\frac18$. Then the function $\mathfrak{F}\ge0$ defined above is smooth and satisfies the following:
\begin{enumerate}
  \item [(i)] $\mathfrak{F}(s)=0$ for $0\le s\le 1-\chi+\chi^2$.
  \item [(ii)] $\mathfrak{F}'\ge0$ and for any $k\ge 1$, $\exp( -k\mathfrak{F})\mathfrak{F}^{(k)}$ is uniformly  bounded.
  %
\end{enumerate}

\end{lma}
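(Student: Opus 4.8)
The plan is to analyze the two explicit one‑variable functions $f$ and $\varphi$ directly and then track how their properties pass through the integral defining $\mathfrak{F}$. First I would record the behavior of $f$: on $[0,1-\chi]$ it is identically $0$, and on $(1-\chi,1)$ one has $f(s)=-\log\!\big[1-\big(\tfrac{s-1+\chi}{\chi}\big)^2\big]$, so writing $u=\tfrac{s-1+\chi}{\chi}\in(0,1)$ we get $f(s)=-\log(1-u^2)$, $f'(s)=\tfrac{1}{\chi}\cdot\tfrac{2u}{1-u^2}$, and more generally each derivative $f^{(k)}(s)$ is a rational function in $u$ whose only poles are at $u=1$, i.e.\ a bounded multiple (depending on $\chi$) of $(1-u^2)^{-k}$ times a polynomial. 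Since $\exp(f)=(1-u^2)^{-1}$ on $(1-\chi,1)$, this already gives that $\exp(-kf)f^{(k)}$ stays bounded as $s\to1^-$; near $s=1-\chi$ everything is $C^\infty$ and glues smoothly because all one‑sided derivatives of $-\log(1-u^2)$ vanish as $u\to0^+$ along with those of the zero function.

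Next I would use the defining relation $\mathfrak{F}'(s)=\varphi(s)f'(s)$. Part (i) is immediate: for $s\le 1-\chi+\chi^2$ we have $\varphi(s)=0$, hence $\mathfrak{F}'\equiv 0$ there, and since $\mathfrak{F}(0)=0$ we get $\mathfrak{F}\equiv 0$ on that interval; smoothness of $\mathfrak{F}$ follows from smoothness of $\varphi$ and $f'$ (the latter is smooth on $[0,1)$, as just discussed) together with the fundamental theorem of calculus. For part (ii), $\mathfrak{F}'=\varphi f'\ge 0$ since $\varphi\ge0$ and $f'\ge0$. The substantive point is the bound on $\exp(-k\mathfrak{F})\mathfrak{F}^{(k)}$. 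Here I would differentiate $\mathfrak{F}'=\varphi f'$ by the Leibniz rule to write $\mathfrak{F}^{(k)}=\sum_{j=0}^{k-1}\binom{k-1}{j}\varphi^{(j)}f^{(k-j)}$, and then compare $\mathfrak{F}$ with $f$ on the region where $\varphi\ne 0$. On $[1-\chi+2\chi^2,1)$ we have $\varphi\equiv 1$, so $\mathfrak{F}(s)=f(s)+c_0$ for the constant $c_0=\mathfrak{F}(1-\chi+2\chi^2)-f(1-\chi+2\chi^2)$; hence $\exp(-k\mathfrak{F})\mathfrak{F}^{(k)}=e^{-kc_0}\exp(-kf)f^{(k)}$ is bounded by the first paragraph. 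On the transition zone $[1-\chi+\chi^2,1-\chi+2\chi^2]$ everything — $\varphi$, all its derivatives, $f$ and all its derivatives — is bounded (it is a fixed compact interval on which $f$ is smooth, the singularity being only at $s=1$), so $\exp(-k\mathfrak{F})\mathfrak{F}^{(k)}$ is trivially bounded there; and on $[0,1-\chi+\chi^2]$ it vanishes. Combining the three regions gives the uniform bound.

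The only mild subtlety, and the step I would treat most carefully, is the smooth matching of $f$ (and hence of $f'$, which enters $\mathfrak{F}'$) at $s=1-\chi$: one must check that $-\log(1-u^2)$ together with all of its derivatives in $s$ tends to $0$ as $u\to 0^+$, so that the piecewise definition of $f$ is genuinely $C^\infty$ across $s=1-\chi$. This is elementary — every $s$‑derivative of $-\log(1-u^2)$ is a rational function of $u$ vanishing at $u=0$ — but it is what makes $\mathfrak{F}$ smooth on all of $[0,1)$ rather than merely on the two open pieces. Everything else is bookkeeping: the constants implicit in "uniformly bounded" are allowed to depend on $\chi$ and $k$, which is exactly what the statement asks for, since $\chi$ is fixed in $(0,\tfrac18)$. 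I would remark that this lemma is quoted from \cite{Lee-Tam-2017-2}, so in the write‑up it may suffice to cite that reference and indicate the short verification above.
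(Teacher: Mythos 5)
Your overall strategy is sound and matches what one would do to verify this directly, but the part you flag as the delicate point is actually a false claim: $f$ is \emph{not} $C^\infty$ across $s=1-\chi$. Writing $u=\frac{s-1+\chi}{\chi}$, one has $f''_u(u)=\frac{2+2u^2}{(1-u^2)^2}$, so $f''_u(0)=2\ne 0$ and hence $f''(s)\to \frac{2}{\chi^2}>0$ as $s\to (1-\chi)^+$ while $f''\equiv 0$ on $[0,1-\chi]$. Thus $f$ is only $C^1$ at $1-\chi$, and the assertion "every $s$-derivative of $-\log(1-u^2)$ is a rational function of $u$ vanishing at $u=0$" fails already at the second derivative. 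Correspondingly, your step "smoothness of $\mathfrak{F}$ follows from smoothness of $\varphi$ and $f'$ (the latter is smooth on $[0,1)$)" does not hold as stated.

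The correct and in fact simpler mechanism is one you already invoke for part (i): $\varphi$ vanishes identically on $(-\infty,\,1-\chi+\chi^2]$, and $1-\chi+\chi^2>1-\chi$. Therefore $\mathfrak{F}'=\varphi f'$ vanishes on an entire neighborhood of $s=1-\chi$, so the lack of smoothness of $f$ there never enters. Smoothness of $\varphi f'$ on $[0,1)$ then follows because it is zero on $[0,\,1-\chi+\chi^2]$, smooth on $(1-\chi+\chi^2,1)$ as a product of smooth functions, and all one-sided derivatives of $\varphi f'$ at $1-\chi+\chi^2$ vanish by the Leibniz rule since $\varphi$ and all its derivatives vanish there. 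With this correction the rest of your argument — the pole-order count for $f^{(k)}$ giving boundedness of $\exp(-kf)f^{(k)}$ near $s=1$, the identity $\mathfrak{F}=f+c_0$ on $[1-\chi+2\chi^2,1)$, the trivial compactness bound on the transition zone, and $\mathfrak{F}^{(k)}\equiv 0$ for $s\le 1-\chi+\chi^2$ — goes through unchanged and establishes both (i) and (ii). Note also that the paper gives no proof of this lemma, only a citation to \cite{Lee-Tam-2017-2}, so there is no internal argument to compare against.
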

Let $\gamma$ be the exhaustion function as in the assumption of the proposition. Let $\chi=\frac{1}{16}$.
For $\rho>1$, let $U_\rho$ be the component of $\gamma^{-1}([0,\rho))$ containing a fixed point $p$. Note that $U_\rho$ exhausts $M$ as $\rho\to\infty$.  Now we consider a function on $U_\rho$ defined by \bee
\phi(x):=\mathfrak{F}(\frac{\gamma(x)}{\rho}).
\eee
and let
$$
\wt g(t):=\exp(2\phi) g(t).
$$
Then $\wt g$ is a smooth family of complete metrics on $U_\rho$ so that
$$
\ppt \wt g=\wt H
$$
where $\wt H=\exp(2\phi)H$.
\begin{lma}\label{l-conformal-3} With the above notations, under the assumptions as in Proposition \ref{p-existence-2}, we have in $U_\rho\times[0,T]$:
\begin{enumerate}
  \item [(i)] $|\wt H(t)|_{\wt g(t)}, |\tn \wt H(t)|_{\wt g(t)}\le C$ for some constant $C$ depending only on $L, T, C_0$, where $\tn$ is the derivative with respect to $\wt g(t)$.
  \item [(ii)] $2\Ric(\t g(t))+\t H(t)\ge -\t K(t)\t g(t)$ for some $\t K\ge0$ so that $\int_0^T\t K(t)dt\le \t K_0$ for some constant $\t K_0$ depending only on $L, T, K_0, C_0$.
  \item [(iii)] $|\Rm(\t g(t))|$ is uniformly bounded.
\end{enumerate}

\end{lma}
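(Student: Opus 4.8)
The plan is to reduce everything to the standard conformal transformation formulas for the connection, the Ricci tensor and the full curvature tensor, feeding in two pieces of geometric input: that the metrics $g(t)$, $t\in[0,T]$, are uniformly equivalent to $g(T)$, and the precise growth control of $\mathfrak{F}$ supplied by Lemma \ref{l-exhaustion-1}. We may assume $\rho\ge1$, since $U_\rho$ exhausts $M$. First I would record preliminary bounds on $\phi=\mathfrak{F}(\gamma/\rho)$. From $\partial_t g=H$ and $|H|_{g(t)}\le L$, integrating $\partial_t\log|v|^2_{g(t)}=H(v,v)/|v|^2_{g(t)}$ gives $e^{-LT}g(T)\le g(t)\le e^{LT}g(T)$; and since $\partial_t\Gamma$ is an algebraic expression in $\n H$, the difference of the Christoffel symbols of $g(t)$ and $g(T)$ is bounded by $C(m,L,T)$. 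Combining these with $|\n_T\gamma|_{g(T)},|\n^2_T\gamma|_{g(T)}\le C_0$ yields $|\n\gamma|_{g(t)}+|\n^2\gamma|_{g(t)}\le C(m,L,T,C_0)$ on $M\times[0,T]$. Then, using $\mathfrak{F}'\ge0$ together with the bounds $\mathfrak{F}^{(k)}\le C_ke^{k\mathfrak{F}}$ of Lemma \ref{l-exhaustion-1}(ii), the chain rule produces, with $C=C(m,L,T,C_0)$ \emph{independent of} $\rho$,
\be\label{e-phi-bd}
|\n\phi|_{g(t)}\le \rho^{-1}Ce^{\phi}\le Ce^{\phi},\qquad |\n^2\phi|_{g(t)}\le \rho^{-2}Ce^{2\phi}+\rho^{-1}Ce^{\phi}\le Ce^{2\phi}.
\ee

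For (i), note that $\phi$ is $t$-independent, so $\wt H=\partial_t(e^{2\phi}g)=e^{2\phi}H$; conformal scaling of a $(0,2)$-tensor gives $|\wt H|_{\wt g}=|H|_g\le L$, and since $\wt\Gamma-\Gamma$ is algebraic in $\n\phi$ one has schematically $\tn\wt H=e^{2\phi}(\n H+\n\phi\ast H)$, so with \eqref{e-phi-bd}, $|\tn\wt H|_{\wt g}=e^{-3\phi}|\tn\wt H|_g\le e^{-3\phi}\cdot Ce^{2\phi}(L+Le^{\phi})\le C(m,L,T,C_0)$. For (ii), I would use the conformal Ricci formula $\wt{\Ric}=\Ric-(m-2)(\n^2\phi-d\phi\otimes d\phi)-(\Delta\phi+(m-2)|\n\phi|^2)g$ together with $\wt H=e^{2\phi}H$, and evaluate $2\wt{\Ric}+\wt H$ on a $\wt g$-unit vector $v$, setting $u=e^{\phi}v$, a $g$-unit vector. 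Then $\Ric(v,v)=e^{-2\phi}\Ric(u,u)\ge-e^{-2\phi}K(t)\ge-K(t)$ and $\wt H(v,v)=H(u,u)\ge-L$, while every term involving $\phi$ is $O(1)$ because the factor $g(v,v)=e^{-2\phi}$ exactly cancels the $e^{2\phi}$ growth in \eqref{e-phi-bd}. This gives $2\wt{\Ric}+\wt H\ge-(2K(t)+C)\wt g$, so $\wt K:=2K+C$ has $\int_0^T\wt K\,dt\le 2K_0+CT=:\wt K_0$ with $C=C(m,L,T,C_0)$.

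For (iii), the constant is allowed to depend on $\rho$. Since $\gamma$ is an exhaustion function, $\overline{U_\rho}$ is compact, hence $|\Rm(g(t))|_{g(t)}\le C_\rho$ on $\overline{U_\rho}\times[0,T]$. The conformal change of the $(0,4)$ curvature tensor has the form $\wt R=e^{2\phi}(R+g\ast A)$ with $A=\n^2\phi-d\phi\otimes d\phi+\tfrac12|\n\phi|^2g$, so $|\Rm(\wt g)|_{\wt g}=e^{-2\phi}|R+g\ast A|_g\le e^{-2\phi}(|\Rm(g)|_g+C_m|A|_g)\le e^{-2\phi}(C_\rho+C_me^{2\phi})\le C_\rho+C$, using \eqref{e-phi-bd} and $e^{-2\phi}\le1$; and $\wt g$ is complete on $U_\rho$ since $\phi\to\infty$ at $\partial U_\rho$.

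I expect that no single computation is the obstacle; the delicate point is the bookkeeping that keeps the constants in (i) and (ii) free of $\rho$. This is exactly what the growth estimate $\mathfrak{F}^{(k)}\le C_ke^{k\mathfrak{F}}$ of Lemma \ref{l-exhaustion-1}(ii) is engineered to provide: the factors $e^{k\phi}$ produced by $\n^k\phi$ are absorbed by the conformal rescaling — the $e^{-2\phi}$ in the $\wt g$-norms for (i), and the $g(v,v)=e^{-2\phi}$ in the Ricci estimate for (ii). The only other point requiring care is transferring the first and second derivative bounds on $\gamma$ from $g(T)$ to $g(t)$, which costs a controlled amount through $\int\n H\,dt$.
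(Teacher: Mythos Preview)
Your proposal is correct and follows essentially the same route as the paper: uniform equivalence of $g(t)$ and $g(T)$ together with the bound on $\nabla H$ transfers the derivative bounds on $\gamma$ to all $g(t)$, Lemma~\ref{l-exhaustion-1}(ii) then gives $|\nabla\phi|_{g(t)}\le Ce^{\phi}$ and $|\nabla^2\phi|_{g(t)}\le Ce^{2\phi}$, and these feed into the standard conformal formulas for the connection, Ricci tensor, and curvature exactly as you describe. The paper's proof of (ii) actually only bounds $\widetilde{\Ric}$ directly (using $\Ric(g(t))\ge -K(t)g(t)$ from the hypotheses of Proposition~\ref{p-existence-2}) and leaves the $\widetilde H$ contribution implicit via (i); your version handling $2\widetilde{\Ric}+\widetilde H$ together on a $\widetilde g$-unit vector is equivalent and arguably cleaner.
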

\begin{proof} Since $|H|\le L$, we have
\be\label{e-conformal-1}
C_1^{-1}g(t)\le g(T)\le C_1 g(t)
\ee
for some $C_1=C_1(L,T)$. On the other hand, since $|\n H|\le L$ and $\ppt g=H$, if we let $\Gamma$ and $\bar \Gamma$ be the Christoffel symbols of $g(t)$ and $\bar g=g(T)$ respectively and let $A=\Gamma-\bar\Gamma$, we have
$|A|_{g(t)}$ is bounded by a constant depending only on $L, T$.
Since

\bee
^t\nabla^2\gamma=\nabla_T^2\gamma+A*\nabla_T\gamma,
\eee we have
\be\label{e-exhaustion}
| \nabla \gamma|_{g(t)}\le C_2,\ \  | \nabla^2\gamma|_{g(t)}\le C_2,
\ee
for some constant $C_2=C_2(L,T,C_0)$.  Next we want to compute the gradient and Hessian of $\phi$. Let the covariant derivative with respect to $g(t)$ be denoted by ${;}$, then
$$
 \phi_i=\rho^{-1}\mathfrak{F}' \gamma_i,
$$
$$
\phi_{;ij}=\rho^{-1}\mathfrak{F}' \gamma_{;ij}+\rho^{-2}\mathfrak{F}''\gamma_i\gamma_j.
$$
By Lemma \ref{l-exhaustion-1},
\be\label{e-conformal-2}
\left\{
  \begin{array}{ll}
    |\n \phi|_{g(t)}\le C_3\exp(\phi);  \\
   |\n ^2\phi|_{g(t)}\le C_3 \exp(2\phi).
  \end{array}
\right.
\ee
for some $C_3=C_3(L,T,C_0)$ because $\phi\ge0$.

\bee
\begin{split}
|\wt H|^2_{\wt g(t)}=&\wt g^{ij}\wt g^{kl}\wt H_{ik}\wt H_{jl}\\
=&e^{-4\phi}g^{ij}g^{kl} e^{4\phi}H_{ik}H_{ij}\\
=&|H|^2_{g(t)}.
\end{split}
\eee

Since
\bee\begin{split}
\t\n\t H=&(\t\n-\n)\t H+\n\t H\\
=&(\t\Gamma-\Gamma)\ast e^{2\phi}\ast H+2e^{2\phi}\phi'\rho^{-1}\n\gamma\ast H\\
&+e^{2\phi}\ast H\\
=&(2\phi'\rho^{-1}\n\gamma\ast g\ast g^{-1})\ast e^{2\phi}\ast H\\
&+2e^{2\phi}\phi'\rho^{-1}\n\gamma\ast H+e^{2\phi}\ast H \end{split},\eee
we have
\be\label{e-wtH}
  |\t\n\t H|_{\t g}\le C_4
\ee
for some $C_4=C_4(L,T,C_0)$. These prove (i).

To prove (ii), by \eqref{e-conformal-2}, denote the Ricci tensor of $\t g$ by $\t R_{ij}$ and the Ricci tensor of $g(t)$ by $R_{ij}$, then
\bee
\begin{split}
 \t R_{ij} =&R_{ij}+(m-2)\phi_{;ij}+(m-2)\phi_i\phi_j-[\Delta_{g(t)} \phi+(m-2)|\n \phi|^2]g_{ij}\\
\ge & -K(t)g_{ij} -C_5\exp(2\phi)g_{ij}\\
\ge&-\t K(t) \t g_{ij}
\end{split}
\eee
where $\t K(t)=K(t)+C_5(L,T,C_0)$, because $\phi\ge0$.

To prove (iii),

     $$|\wt \Rm|_{\wt g}\le C_6\exp(-2\phi)\lf(|\Rm|_{g}+ |\nabla^2 \phi|_g+|\nabla \phi|^2_g\ri)
     $$
     for some $C_6=C_6(m) $. By using \eqref{e-conformal-2} and the fact that the curvature of $g$ is uniformly bounded in $U_\rho\times[0,T]$ the result follows.

\end{proof}

We are now ready to prove Proposition \ref{p-existence-2}.

\begin{proof}[Proof of Proposition \ref{p-existence-2}] Let $\phi, \rho, U_\rho, \t g$ be as above. We claim that there is $T_0>0$ depending only on $m, n, K_0, \kappa, e_0, L, C_0, T$ such that the heat flow for harmonic map from $(U_\rho,\t g(t))$ to $N$ with initial map $f|_{U_\rho}$ has a solution $F^{(\rho)}$ on $U_\rho\times[0,T_0]$ so that
\bee
\sup_{U_\rho\times[0,T_0]}e(F^{(\rho)})<\infty.
\eee
Suppose the claim is true, using the fact that $2\Ric(\t g(t))+\wt H(t)\ge -\t K(t) \t g(t)$ for some $\t K$ with
$$
\t K_0:=\int_0^T \t K<\infty
$$
where $\t K_0$ depends only on $K_0,   L, C_0, T$, one can proceed as in the proof of Theorem \ref{t-existence-main} to conclude the proposition is true by taking limit of a subsequence of $F^{(\rho)}$ with $\rho\to\infty$.

In order to prove the claim, we use the method as in \cite{E-S} and \cite{Li-Tam}.  Since the image of $f|_{U_\rho}$ is bounded in $N$. Let $f|_{U_\rho}\Subset \Omega\Subset N$.
Here $\Omega$ is a bounded domain in $N$ and let $\Omega_1$ be another bounded domain with $\Omega\Subset \Omega_1$. Isometrically embed a neighborhood $O$ of  $\Omega_1$ into $\mathbb{R}^q$ for some $q\in\mathbb{N}$. Let $W$ be a bounded tubular neighborhood of $O$ in  $\mathbb{R}^q$. Let $\pi:W\to O$ be the nearest point projection. Write $$
\pi=(\pi^1, \pi^2, \cdots, \pi^q)=(\pi^A)_{1\leq A\leq q}. $$
We can extend $\pi$ smoothly from a possible smaller tubular neighborhood $V$ of $\Omega_1$ to the whole $\mathbb{R}^q$ such that each $\pi^A$ is compactly supported and  $\pi$ is not changed in $V$. Hence, $\pi^A$, $\pi^A_B:=\frac{\p\pi^A}{\p z^B}$, $\pi^A_{BC}:=\frac{\p\pi^A}{\p z^B\p z^C}$ etc are bounded, where $z=(z^A)$ are the standard coordinates of $\mathbb{R}^q$.

Let $f:U_{\rho}\to N$ so that $f(U_{\rho})\Subset \Omega$. Then we can write
$$
f(x)=(f^A(x))\in\R^q.
$$
Note that $e(f)=\sum\limits_A|\nabla f^A(x)|^2$. By Lemma \ref{l-conformal-3},
$$
|\t H|_{\t g(t)}, |\tn\t H|_{\t g(t)}\le C_1
$$
for some $C_1=C_1(L,T,C_0)$. $|\Rm(\t g(t))|\le Q$ which may also depend on $\rho$.
We may assume
$$
|\pi^A_{BC}|\le D.
$$
Consider the following system of equations:
\be\label{harmonic-pde}
\heat F^A=-\pi^A_{BC}(F)\la \nabla F^B,\nabla F^C\ra  \ee in $U_{\rho}\times(0, T]$  and $F^A(x, 0)=f^A(x)$ in $U_{\rho}$ for $A=1, 2, \cdots, q$. By Lemma \ref{l-semi}, the system has a smooth solution on $U_{\rho}\times[0,T_1]$, where
$ T_1$ depends only on  $m, q, D,  L, T,  C_0, Q, \sup_{U_\rho}e(f)$.  Moreover, $F$ and $|\nabla F|$ are uniformly bounded by a constant $C$ depending only on  $m, q, D,  L, T,  C_0, Q, \sup_{U_\rho}e(f)$ and $\Omega$.
On the other hand,
$$
F^A(x,t)= \int_M G(x,t;y,0)f^A(y)dy+\int_0^t\int_MG(x,t;y,s)Q^A(y,s) dV_s(y) ds
$$
where $G$ is the fundamental solution to the heat equation coupled with $g(t)$ and
$Q^A$ is the RHS of \eqref{harmonic-pde}. By Proposition \ref{p-heat-grad},  we conclude that
$$
|F^A(x,t)-f^A(x)|\le C_2t^\frac12
$$
for some constant $C_2=C_2(m, q, D, Q, T, \sup_{U_\rho}e(f))$. In particular, there exists $0<T_2\leq T_1$ depending only on  $m, q, D, Q, T, \sup_{U_\rho}e(f)$ such that $F(x,t)$ will be inside the tubular neighborhood $W$ of $O$. By the proof of \cite[Lemma 3.2]{Li-Tam} and the maximum principle Theorem \ref{max}, we conclude $F(x,t)\in N$ for $(x,t)\in U_\rho\times[0,T_2]$. This implies $F(x,t)$ satisfies the harmonic heat flow on $U_\rho\times[0,T_2]$ to $N$.

Up to now we have proved the following: If $f:U_\rho\to N$ is a  smooth bounded map with energy density bounded, then there is a smooth solution $F$ to the heat flow for harmonic map with initial data $f$ on $U_\rho\times[0,T_2]$  so that the image and the energy density of $F$ are uniformly bounded. By Lemma \ref{l-extension}, we conclude that as long as the energy of $F$ is uniformly bounded on $U_\rho\times[0,T']$ and $F$ has bounded image, then $F$ can be extended beyond $T'$ as a solution to the harmonic map heat flow so that the energy is uniformly bounded.

 Recall that $2\Ric(\t g)+\t H\ge -\t K(t)\t g(t)$ with $\t K_0=\int_0^T \t K<\infty$. Using this condition, by Lemma \ref{l-e-energy}, we conclude
that
\bee
e(F)(\cdot,t)\le 2e_0\exp(\t K_0)
\eee
as long as $0\le t\leq T_0:=\min\{T, \frac 12\lf( 2\kappa e_0 \exp(\t K_0)\ri)^{-1}\}$. By Lemma \ref{l-e-tension}, there is constant $C_3$ depending only on $m, n, e_0, L,\t K_0, T, \kappa$ such that
$$
|\tau(F)|(x,t)\le C_3t^{-\frac12}
$$
as long as $0\le t\le T_0$. Since $\tau(F)=F_t$, we have $d_N(f(x), F(x,t))\le C_5t^\frac12$. In particular, the image of $U_\rho\times[0,T_0]$ is bounded because $f(U_\rho)$ is bounded. This completes the proof of the claim and hence the proposition.
\end{proof}

\end{document}